\theoremstyle{plain}
\newtheorem{theorem}{Theorem}[section]
\newtheorem{lemma}[theorem]{Lemma}
\newtheorem{corollary}[theorem]{Corollary}
\theoremstyle{definition}
\newtheorem{definition}[theorem]{Definition}
\newtheorem{sdefinitions}[theorem]{Some definitions}
\newcommand{\la}{\langle}
\newcommand{\ra}{\rangle}
\newcommand{\ssquare}{\,\mbox{\scriptsize $\square$}\,}
\newtheorem{remark}[theorem]{Remark}
\newtheorem{remarks}[theorem]{Remarks}
\def\<#1>{\langle\, #1\,\rangle}
\newcommand\norm[1]{\Vert #1 \Vert}
\newcommand{\mA}{\mathscr{A}}
\newcommand{\mV}{\mathfrak{V}}
\newcommand{\bd}{{\ast\ast}}
\newcommand{\F}{\mathbb{F}}
\newcommand{\C}{\mathbb{C}}
\newcommand{\Z}{\mathbb{Z}}
\newcommand{\N}{\mathbb{N}}
\DeclareMathOperator{\supp}{supp}
\newcommand{\WAP}[1][G]{\ensuremath{\mathscr{WAP}}(#1)}
\newcommand{\W}{\mathscr{WAP}}
\numberwithin{equation}{section}
\newcommand{\WAPw}[1][G,w^{-1}]{\ensuremath{\mathscr{WAP}}(#1)}
\newcommand{\UCw}[1][G,w^{-1}]{\ensuremath{\mathscr{UC}}(#1)}
\newcommand{\CBw}[1][G,w^{-1}]{\ensuremath{\mathscr{CB}}(#1)}
\renewcommand{\emptyset}{\varnothing}
\font\seis=cmr6
\def\CB{\mathscr{CB}}
\def\wap{{\seis{\mathscr{WAP}}}}
\newcommand\restr[2]{{
  \left.\kern-\nulldelimiterspace 
  #1 
  \vphantom{\big|} 
  \right|_{#2} 
  }}
\begin{document}
\newcommand{\I}{\mathbb{I}}

\title[enArity of preduals of von Neumann algebras]{Orthogonal $\ell_1$-sets and extreme non-Arens regularity of  preduals of von Neumann algebras}

\author[Filali and Galindo]{M. Filali \and  J. Galindo}

\thanks{ Research of  the second named  author  supported by Universitat Jaume I, grant UJI-B2019-08}
 \keywords{Arens product, Arens-regular algebra, Banach algebra,  predual, von Neumann algebra, orthogonal set, support projection, extremely non-Arens regular, weighted group algebras, Fourier algebra}

\address{\noindent 
Department of Mathematical Sciences\\University of Oulu, Oulu,
Finland.\\ E-mail: {\tt mfilali@cc.oulu.fi}}
\address{\noindent  Instituto Universitario de Matem\'aticas y
Aplicaciones (IMAC)\\ Universidad Jaume I, E-12071, Cas\-tell\'on,
Spain.\\  E-mail: {\tt jgalindo@uji.es}}

\subjclass[2010]{Primary 47D35; Secondary 22D15;43A46, 43A60,47C15}

\date{\today}

\begin{abstract}
We propose a new
definition for a  Banach algebra $\mA$ to be extremely non-Arens regular, namely that the quotient  $\mA^\ast/\W(\mA)$ of  $\mA^\ast$ with the space of its  weakly almost periodic elements  contains an isomorphic copy of $\mA^\ast.$ This definition is simpler and formally stronger than the original one introduced by Granirer in the nineties.

We then identify sufficient conditions for the predual $\mV_\ast$ of a von Neumann algebra $\mV$ to be
  extremely non-Arens regular
  in this new sense.  These conditions are obtained with the help of orthogonal $\ell_1$-sets of $\mV_\ast.$

	We show that some of the main algebras
   in harmonic analysis satisfy these conditions. Among them,  there is
\begin{enumerate}
\item the weighted semigroup algebra of any  weakly cancellative discrete semigroup, for any  diagonally bounded weight,
\item the weighted group algebra of any  non-discrete locally compact infinite group and for any weight,
\item the weighted measure algebra of any  locally compact infinite group, for any  diagonally bounded weight,
\item the Fourier algebra of any locally compact infinite group having its local weight greater or equal than its compact covering number,
\item the Fourier algebra of any countable discrete  group containing an infinite amenable subgroup.
\end{enumerate}
  \end{abstract}
\maketitle

\section{Introduction}
There are two natural actions of a Banach algebra $\mA$  on its Banach dual $\mA^\ast$
\begin{align}\label{actions}
  \langle  fa,b\rangle=\langle f,ab\rangle\quad\text{and}\quad   \langle
  a.f,b\rangle=\langle f,ba\rangle\end{align}
   for every $f\in \mA^*,$ and $a,b \in \mA.$
   Each of these two actions induces a natural actions of $\mA^\bd$ on $\mA^\ast$
   \begin{align}\label{actionsd}
  \langle \phi \ssquare f,a \rangle=\langle \phi ,fa \rangle\quad \text{and}\quad   \langle
  f\lozenge \phi ,a\rangle=\langle \phi ,a.f\rangle \end{align}
  for every $\phi\in \mA^\bd$, every $f \in \mA^\ast$  and every $a\in \mA$.
In \cite{arens}, Richard Arens used these actions to  extend the product of the  Banach algebra $\mA $ to two Banach algebra products on the second dual $\mathcal{A}^{**}$. These products are naturally defined by
\begin{align*}
  \langle \phi\ssquare \psi,f \rangle=\langle \phi , \psi\ssquare  f \rangle\quad \text{and}\quad   \langle
  \phi\lozenge \psi  ,f\rangle=\langle \psi ,f\lozenge \phi \rangle\end{align*}
  for every $\phi,\psi \in \mA^\bd$ and every $f \in \mA^\ast$. As their definition suggests, these products may differ.

	In fact,  left $\ssquare$-translations,  $\phi\mapsto \phi\ssquare \psi$ are easily seen to be  weak$^\ast$-weak$^\ast$-continuous but
	    right $\ssquare$-translations $\phi\mapsto \psi\ssquare \phi$ may fail to be so when  $\psi\notin\mA$. The same properties hold with right and left $\lozenge$-translations.

Let $\mA_1 $ denote the  closed unit ball of $\mA$. The elements $f\in\mA^*$ having their left orbits $f\mA_1$  relatively weakly compact (equivalently, having their right orbits $\mA_1.f$  relatively weakly compact)
 are called weakly almost periodic and form a closed subspace $\W(\mA)$ of $\mA^*$.
 Due to Grothedieck's double limit criterion, $\W(\mA)$ is precisely the subspace of $\mA^*$ on which the two Arens-products agree, see
\cite{pym}.

     So,  when $\mA^* = \W(\mA)$, i.e., when the quotient
$\mA^*/\W(\mA)$ is trivial, there is only one Arens product,  which is separately weak*-weak*-continuous.
  In such a situation, the algebra $\mA$ is said to be \emph{Arens regular}.
	Otherwise,  the algebra $\mA$ is said to be \emph{non-Arens regular} or \emph{Arens irregular}.
   C$^*$-algebras constitute the paradigmatic example of Arens regular Banach algebras. If $\mA$ is a $C^\ast$-algebra, its universal representation identifies $\mA$ with a norm-closed  algebra of operators on a Hilbert space. By the Sherman-Takeda theorem,   $\mA^\bd$ may then be identified with the closure of the universal representation of $\mA$ in the weak operator topology, and both Arens  products coincide with the multiplication of operators, see \cite{BD}.

Most of the algebras of functions arising in harmonic analysis
 turned out to be   non-Arens regular, even dramatically so. Not only $\W(\mA)$  is often different from  $\mA^\ast$ but  the quotient $\mA^\ast/\W(\mA)$ tends to be  as large as $\mA^\ast$. A name for this situation was coined  by Granirer in  \cite{granirer} when he  called a Banach algebra  $\mA$ \emph{extremely non-Arens regular} (enAr, for short) when $\mA^\ast/\W(\mA)$ contains a closed subspace that has $\mA^\ast$ as a quotient. The group algebra $L^1(G)$ of an infinite locally compact group $G$ is an important example of a Banach algebra that is enAr, see \cite{FG}.
 Extreme non-Arens regularity of the Fourier algebra $A(G)$ is more subtle. First, the question of whether $A(G)$ is non-Arens regular
is still not completely settled. It is known that $A(G)$ is not Arens regular  if $G$ contains an infinite amenable subgroup or if $G$ is not discrete. The first assertion was obtained by Forrest  in \cite[Proposition 3.7]{forrest91}, improving upon results of Lau and Wong \cite[Proposition 5.3]{lau-wong}, and the second  assertion was proved by Forrest
\cite[Corollary 3.2]{forrest91}.
If  $G$ is far enough from being discrete, then $A(G)$ is even enAr. For this to happen, it is enough that the minimal cardinal of an open base at the identity,  $\chi(G)$, is larger than $\kappa(G)$,  the minimal number of compact sets required to cover $G$. This was proved by Granirer \cite{granirer} when $\chi(G)=\omega$ and by Hu \cite{Hu97} in the general case.

For more details, the reader is directed to \cite{dales},
\cite{dales-lau}, \cite{filali-singh}. On the most recent study of the size of quotients of function spaces on a locally compact group, the reader is directed to
 \cite{FG1}.

  Our recent paper \cite{FGnew} devised a general method for proving that a Banach algebra is enAr. This method showed how the two main non-Arens regularity triggers in $L^1(G)$ and $A(G)$ -non-compactness and bounded approximate identity in $L^1(G)$, non-discreteness and  amenability in $A(G)$- are actually particular cases of a single one, the existence of $\ell^1$-bases with a certain multiplicative triangle-like structure in a bounded subset of   the algebra.

When the Banach algebra is the predual of a von Neumann algebra, these $\ell^1$-bases can be taken orthogonal  and then we are able  to construct bounded linear isomorphisms  of $\mA^*$  into
 the  quotient $\mA^\ast/\wap(\mA)$. These isomorphisms are even isometries  when the triangles lie in the unit sphere of $\mA$.
This construction has led us  to propose  a new version of extreme non-Arens regularity by requiring an isomorphic (or an isometric) copy of $\mA^*$  into
 the  quotient $\mA^\ast/\wap(\mA)$. This definition seems to be more natural and still holds in most of the known classes of enAr algebras.

 There is no reason for Banach algebras that happen to be  the  predual of  von Neumann algebras to be enAr. They can be even Arens regular as it the case with the semigroup algebra $\ell^1$ with pointwise product as already observed by Arens in \cite{arens}, or  with many weighted semigroup algebras $\ell^1(S,w)$, see Remark \ref{wAr}.

  We should also mention that the utmost failure of Arens regularity can also be characterized through topological centers. The left and right topological centers of a Banach algebra $\mA$ are, respectively, the sets  $Z_t^{(l)}(\mA)$ and $Z_t^{(r)}(\mA)$  made respectively of elements $\psi\in \mA^\bd$  such that $\phi  \mapsto \psi\ssquare \phi$ and $\phi \mapsto \psi \lozenge \phi$ are  weak*-weak*-continuous.   Dales and Lau \cite{dales-lau} called $\mA$ strongly Arens irregular (sAir for short) when both centers are as small as possible, i.e., when $Z_t^{(l)}(\mA)=Z_t^{(r)}(\mA)=\mA$.
	This happens for a number of Banach algebras in harmonic analysis defined on an infinite locally compact group. We direct the reader to \cite{DLS}, \cite{FiSa07}, \cite{LL1}, \cite{N} and  for the group algebra,
	to \cite{dales-lau}, \cite{FiSa} and \cite{N08}  for the weighted group algebra of an infinite group with a diagonally bounded weight, and to \cite{Loetal} for the measure algebra of an infinite group.
					We may also direct the reader to \cite{FGnotyet, MMM, LL2,LL3}, where various locally compact groups are found for which $A(G)$ is sAir.

It may be worthwhile to note that,  by a recent result of  Losert \cite{lose17}, the Fourier algebra on the free group on two letters $A(\mathbb{F}_2)$ is an example of an algebra which is enAr but not sAir,   \cite{FGnew}. Examples of Banach algebras that are sAir but not enAr can be found in \cite{HN2}.

\section{Outline of the paper}
As stated above,
Granirer defined a Banach algebra $\mA$ to be  enAr when the quotient
$\mA^*/\W(\mA)$ contains a closed linear subspace
which has $\mA^*$ as a continuous linear image.
We start by modifying this definition.

\begin{definition} \label{new} We say that a Banach algebra $\mA$ is \emph{$r$-enAr}, where $r\geq 1$, when there is a linear isomorphism of  $\mA^*$ in the quotient
$\mA^*/\W(\mA)$ with distortion $r$, i.e., when there is a linear isomorphism $\mathcal{E}\colon \mA^\ast \to \mA^*/\W(\mA)$   with
\[\norm{\mathcal{E}}\norm{ \mathcal{E}^{-1}}=r.\]
When $r=1$, the map $\norm{ \mathcal{E}^{-1}} \mathcal{E}$ is a linear isometry and  we say that \emph{$\mA$ is isometrically enAr}.
\end{definition}

Observe that  an  $r$-enAr Banach algebra is always enAr in the sense of Granirer.

Even if we have no example of $\mA$ which is enAr in the sense of Granirer but is not $r$--enAr for any (or some) $r$,  this new definition,  formally stronger, is more natural and easy to handle and is still satisfied by many of the known examples of enAr algebras as we shall see in the present paper.
The authors of this paper already came across with   this concept in a previous paper  (without being formally defined then). It is proved  in \cite[Theorems B and 6.4]{FG} that the group algebra $L^1(G)$ is isometrically enAr for every infinite locally compact group.

In the sections below, we start by reconsidering the main theorem of  \cite{FGnew}. The conditions of that theorem, when combined with the concept of orthogonality available in preduals of von Neumann algebras, are used in Theorem \ref{TTr} to set the
 natural conditions under which these algebras are $r$--enAr for some $r\geq 1$.

This is followed by two sections applying Theorem \ref{TTr} to the Banach algebras in harmonic analysis
which are non-Arens regular.
We extend the main theorem  in \cite{FGnew} to the weighted group algebra and  prove that  the weighted group algebra $L^1(G,w)$ is enAr for any non-discrete locally compact group $G$ and for any weight function on $G.$
If the weight function $w$ is diagonally bounded, then the same is also true for the weighted semigroup algebra $\ell^1(S,w)$ for infinite, discrete, weakly cancellative semigroups $S$, in particular for infinite discrete groups.
We also show that when the weight function $w$ is diagonally bounded on $G$, then the  weighted measure algebra $M(G,w)$ is $r$-enAr for any  infinite locally compact group $G.$

In our last section we show that the conditions of Theorem \ref{TTr} are met when $A(G)$ contains either a  TI-net (a net converging to a topologically invariant mean in $A(G)^{\ast\ast}$) or a   bai-sequence (a sequence converging to a right identity in $A(G)^{\ast\ast}$, see section 3.2 for  the definitions). This implies that $A(G)$ is  isometrically enAr when $G$  satisfies   the condition $\chi(G)\ge \kappa(G)$  or when $G$ is a second countable group    containing a non-compact amenable open subgroup. This strengthens the corresponding results in
\cite{granirer}, \cite{Hu97} and \cite{FGnew}.

\section{Triangles and weakly almost periodic functionals}
By a directed set, it is always meant a set $\Lambda$ together  with a preorder $\preceq$ with the additional property that every pair of elements has an upper bound. We will use a single letter,  $\Lambda$ usually, to  denote a  directed set, the existence of $\preceq$ is implicitly assumed.

Definitions \ref{def:1} through \ref{def:last} were introduced in \cite{FGnew}.

 \begin{definition}\label{def:1}
   Let $\Lambda$ be a directed set and for every $\xi \in \Lambda$, define $\xi^+=\left\{\alpha \in \Lambda\colon \xi \prec \alpha\right\}$.
   We define the \emph{true cardinality} of $\Lambda$ as $\mathrm{tr}\left|\Lambda\right|=\min_{\xi\in \Lambda} \left|\xi^+\right|$.
 \end{definition}

 \begin{lemma}[van Douwen, Lemma of \cite{vanD}]\label{vanD}
 If $\Lambda$ is a directed set, then $\Lambda$ admits a pairwise disjoint collection of $\mathrm{tr}\left|\Lambda\right|$-many cofinal subsets of $\Lambda$ each having cardinality  $\mathrm{tr}\left|\Lambda\right|$. \end{lemma}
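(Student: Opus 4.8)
The plan is to construct the required family by a transfinite recursion of length $\kappa:=\mathrm{tr}|\Lambda|$, and the first thing I would do is dispose of the degenerate cases. If $\kappa=0$ there is nothing to prove. Also $\kappa$ cannot be a finite positive integer: if a minimizer $\xi_0$ had $|\xi_0^+|=\kappa<\omega$, then $\xi_0^+$ would be a finite cofinal subset of $\Lambda$ (cofinal because every $\xi^+$ is nonempty, so $\Lambda$ has no maximal element, and then one uses directedness), which forces $\Lambda$ to have a greatest element $w$, absurd since $w^+\neq\emptyset$. So from now on $\kappa$ is an infinite cardinal, and I record two facts: (i) $|\xi^+|\geq\kappa$ for every $\xi\in\Lambda$, by the very definition of $\mathrm{tr}|\Lambda|$ as a minimum; and (ii) fixing a minimizer $\xi_0$, the set $\Lambda_0:=\xi_0^+$ is cofinal in $\Lambda$ (by directedness, using that $\Lambda$ has no maximal element) and $|\Lambda_0|=\kappa$.

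For the construction I would use $|\kappa\times\Lambda_0|=\kappa$ to fix an enumeration $\kappa\times\Lambda_0=\{(i_\nu,\xi_\nu):\nu<\kappa\}$ in which every pair occurs exactly once. Recursively on $\nu<\kappa$, I would choose $\zeta_\nu\in\Lambda$ with $\xi_\nu\prec\zeta_\nu$ and $\zeta_\nu\notin\{\zeta_\mu:\mu<\nu\}$; this is possible because $|\xi_\nu^+|\geq\kappa>|\nu|\geq|\{\zeta_\mu:\mu<\nu\}|$, so $\xi_\nu^+$ still contains a previously unused element. Then set $C_i:=\{\zeta_\nu:\nu<\kappa,\ i_\nu=i\}$ for $i<\kappa$.

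The verification is then routine. The $\zeta_\nu$ are pairwise distinct by construction, so the sets $C_i$ are pairwise disjoint, and $|C_i|=|\{\nu<\kappa:i_\nu=i\}|=|\{i\}\times\Lambda_0|=\kappa$ because $\nu\mapsto(i_\nu,\xi_\nu)$ is a bijection; thus one obtains $\kappa$ pairwise disjoint subsets of $\Lambda$, each of cardinality $\kappa=\mathrm{tr}|\Lambda|$. For cofinality of a given $C_i$: let $\eta\in\Lambda$; since $\Lambda_0$ is cofinal, choose $\xi\in\Lambda_0$ with $\eta\preceq\xi$; the pair $(i,\xi)$ equals $(i_\nu,\xi_\nu)$ for some $\nu$, and then $\zeta_\nu\in C_i$ satisfies $\eta\preceq\xi_\nu\prec\zeta_\nu$, so $C_i$ is cofinal in $\Lambda$.

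I do not expect a genuine obstacle; the only point deserving attention is that $\kappa$ may be a singular cardinal, so the recursion could have singular length. This causes no trouble, since $|\nu|<\kappa$ holds for every ordinal $\nu<\kappa$ regardless of the cofinality of $\kappa$, and this is exactly what both the choice of $\zeta_\nu$ and the cardinality count of $C_i$ require. It is also worth noting that the statement asks only for a pairwise disjoint family of cofinal subsets, not for a partition of $\Lambda$, so the elements of $\Lambda$ not of the form $\zeta_\nu$ need not be placed anywhere.
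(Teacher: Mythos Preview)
The paper does not supply its own proof of this lemma; it is quoted verbatim from van Douwen's note \cite{vanD} and used as a black box. Your argument is correct and is essentially van Douwen's original one-paragraph proof: pass to a cofinal subset $\Lambda_0$ of size exactly $\kappa=\mathrm{tr}|\Lambda|$, enumerate $\kappa\times\Lambda_0$ in order type $\kappa$, and run a length-$\kappa$ recursion choosing at each stage a fresh element of the appropriate $\xi_\nu^+$, which is always possible since $|\xi_\nu^+|\geq\kappa>|\nu|$.
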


\begin{remark}
In our application to  the algebras in harmonic analysis,  our set $\Lambda$ will  always be the initial ordinal associated to a cardinal $\eta$
(see, e.g., Theorems \ref{thm:quotient-non-separable} and \ref{Gentent}).

Since $|\xi^+|=\eta$ for each $\xi<\eta$, the  true cardinality of $\eta$ is then its cardinality.
\end{remark}

 \begin{definition}
  Let   $(\Lambda,\preceq)$ be a directed set and  $\mathcal U$ a subset of $\Lambda\times \Lambda$. We say that
	\begin{enumerate}
\item  $\mathcal U$ is \emph{vertically cofinal}, when for every $\alpha\in \Lambda$, there exists $\beta(\alpha)\in\Lambda$ such that $(\alpha,\beta)\in \mathcal U$ for every $\beta\succeq \beta(\alpha)$.
\item $\mathcal U$ is \emph{horizontally cofinal}, when for every $\beta\in\Lambda$, there exists $\alpha(\beta)\in \Lambda$ such that $(\alpha,\beta)\in \mathcal U$ for every $\alpha\succeq \alpha(\beta)$.
\end{enumerate}
\end{definition}

\begin{definition}\label{dindex}
  Let $\mathcal{U}$  and $X$ be two  sets. We say that
	\begin{enumerate}
\item  $X$   is \emph{indexed} by $\mathcal U$, when  there exists  a surjective map
$x:\mathcal U\to X$.
 When     $\mathcal U\subset \Lambda\times \Lambda$, for  some other set $\Lambda$,  we say that $X$ is \emph{double-indexed} by
$\mathcal U$ and write
$X=\{x_{\alpha\beta}:(\alpha,\beta)\in \mathcal U\}$, where  $x_{\alpha\beta}=x(\alpha,\beta)$.
\item If $X$ is  double-indexed by $\mathcal U$,  we say it is \emph{vertically injective} if $x_{\alpha\beta}=x_{\alpha^\prime \beta^\prime}$ implies $\beta=\beta^\prime$ for every $(\alpha,\beta)\in\mathcal U$. If $x_{\alpha\beta}=x_{\alpha^\prime \beta^\prime}$ implies $\alpha=\alpha^\prime$ for every $(\alpha,\beta )\in \mathcal U$, we say that $X$ is  \emph{horizontally injective}.
\end{enumerate}
\end{definition}

 \begin{definition}\label{def:last}
  Let $\mA$ be a Banach algebra,  $(\Lambda,\preceq)$ be a directed set
	  and  $A$ and $B$ be two subsets of $\mA$ indexed by $\Lambda$ so that
	$A= \left\{a_\alpha\colon \alpha\in \Lambda\right\}$ and $B=\left\{b_\alpha\colon \alpha\in  \Lambda\right\}$.
 \begin{enumerate}
\item The  sets
 \[T^{u}_{AB}=\left\{a_\alpha  b_\beta\colon \alpha, \beta \in \Lambda,\; \alpha\prec \beta\right\}\quad \mbox{ and } \quad
T^{l}_{AB}=\left\{a_\alpha  b_\beta\colon \alpha, \beta \in \Lambda,\; \beta\prec \alpha\right\}\] are called, respectively, the \emph{upper} and \emph{lower triangles defined by $A$ and $B$}.

\item A set $X\subseteq \mA$
is said to  \emph{approximate  segments in $T_{AB}^{u}$}, if there exists a horizontally cofinal set $\mathcal U$ in $\Lambda\times\Lambda$
so that $X$ is double-indexed as $X=\{x_{\alpha\beta}\colon (\alpha,\beta)\in \mathcal U\} $, and for each $\alpha\in \Lambda$,
 \[\lim_{\beta}\left\|x_{\alpha\beta} - a_\alpha b_\beta\right\|=0.\]

\item A set $X\subseteq \mA$ is said to \emph{ approximate segments in $T_{AB}^{l} $}, if there exists a vertically cofinal set $\mathcal U$ in $\Lambda\times\Lambda$ so that $X$ is double-indexed as $X=\{x_{\alpha\beta}\colon (\alpha,\beta)\in \mathcal U\} $, and for each $\beta\in \Lambda$,
\[\lim_{\alpha}
 \left\|x_{\alpha\beta}-a_\alpha b_\beta\right\|=0.\]
 \end{enumerate}\end{definition}

\begin{definition}\label{def:supp}
Let $\mV$ be a von Neumann algebra with predual  $\mV_\ast$, and let $a\in \mV_\ast$ be a  positive normal functional. The \emph{support projection} of $a$  is the smallest projection  $S(a)\in \mV$ such that $\langle a,S(a)\rangle=\langle a,I\rangle=\|a\|$.
\end{definition}

\begin{definition}\label{def:appl1}
Let $\mV$ be a von Neumann algebra and let   $A\subseteq \mV_\ast^+$. We say that $A$ is an  \emph{orthogonal} $\ell^1(\eta)$-set with bound $M$ and  constant $K>0$  if
\begin{enumerate} \item
 $|A|=\eta$,
\item    $S(a)\,S(a^\prime)=0$ whenever $a,a^\prime \in A$, $a\neq a^\prime$ and
\item  $ K\leq \norm{a}\leq M$ for every $a\in A$.
\end{enumerate}
\end{definition}

\begin{remarks}\label{rems}
\begin{enumerate}

    \item It follows from the Definition of $S(a)$  and the Cauchy-Schwarz inequality,  that for every $a\in \mV_\ast^+$, we have  $S(a)a= a.S(a)=a$.
		Here we are looking at $a$ as an element in $\mV^*$ and using the actions in \ref{actions}.
		\item If  $S(a)$ and $S(b)$ are orthogonal, then $\langle a,S(b)\rangle=0$. This follows from the previous point, making $S(a)a$ act  on $S(b)$.
\item
We have chosen the term \emph{orthogonal $\ell^1(\eta)$-set} because these sets
are   equivalent to the unit $\ell^1$-basis, i.e.,  the closed vector space they span is isomorphic to $\ell^1(\eta)$. To see this  let $a_1,\ldots, a_k\in A$ and $z_1,\ldots, z_k\in \C$.
Taking into account that $\left\|\sum_{k=1}^{n}
\frac{\overline{z_k}}{|z_k|}S(a_k)\right\|_{_{\mV}}\leq 1$, we have
that \begin{align*}
\left\|\sum_{k=1}^{n}z_k a_k\right\|&\geq
\Bigl| \left\langle \sum_{k=1}^{n}z_k a_k, \sum_{k=1}^{n}
\frac{\overline{z_k}}{|z_k|}S(a_k)\right\rangle\Bigr|
\\
&=\left|\sum_{k=1}^n \sum_{j=1}^n z_k \frac{\overline{z_j}}{|z_j|}\left\langle S(a_j),a_k\right\rangle\right|
    \\&= \left|\sum_{k=1}^n  |z_k|\left\langle S(a_k),a_k\right\rangle
\right|=\sum_{k=1}^n |z_k| \|a_k\|\geq K\sum_{k=1}^{n}|z_k|,
\end{align*}
and this shows that $A$ is equivalent to the unit $\ell^1$-basis.
\end{enumerate}\end{remarks}
\subsection{Auxiliary lemmas}
The following definition and its consequence, recorded in \cite{FG}, will prove convenient to exploit Definition \ref{def:appl1}.

 \begin{definition} Let $E_1$ and $E_2$ be Banach spaces, $\mathcal T\colon E_1\to E_2$ be a bounded linear map, $F$ be a closed subspace of $E_2$,
and let $c>0$. We say that $\mathcal T$ is $c$-preserved by   $F$    when the following property holds
\begin{align}\label{preserve}
   \tag{$\ast$}
& \left\|\mathcal T\xi-\phi\right\|\geq c\|\xi\|, \quad \mbox{for all } \phi\in F \mbox{ and }\xi \in E_1.
\end{align}
  \end{definition}

The proof of next lemma is similar to that of \cite[Lemma 2.2]{FG}.

\begin{lemma}\label{quotient}
Let $\mathcal T\colon E_1\to E_2$ be a bounded linear isomorphism of the Banach spaces $E_1$ into $E_2$  and let $D,$ $F$     be closed linear subspaces of $E_2$ with $D\subseteq F$. Denote by  $Q \colon E_2\to  E_2/D$ the quotient map.  If $T$ is $c$-preserved by $F$, then  the  map $Q\circ \mathcal T\colon E_1\to E_2/D$  is a linear  isomorphism with distortion at most $\frac{\|\mathcal T\|}{c}$.

    \end{lemma}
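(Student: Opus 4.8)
The plan is to check the two defining ingredients of a linear isomorphism with a prescribed distortion bound: a norm estimate from above for $Q\circ\mathcal T$, and a norm estimate from below (equivalently, a bound on the norm of its inverse on the range). The upper estimate is immediate, since the quotient map $Q\colon E_2\to E_2/D$ satisfies $\|Q\|\le 1$, whence $\|Q\circ\mathcal T\|\le\|Q\|\,\|\mathcal T\|\le\|\mathcal T\|$.

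For the lower estimate, I would fix $\xi\in E_1$ and unwind the quotient norm:
\[
\|(Q\circ\mathcal T)\xi\|_{E_2/D}=\inf_{d\in D}\|\mathcal T\xi-d\|.
\]
The key (and essentially the only) point is that, because $D\subseteq F$, this infimum is taken over a subset of $F$ and is therefore no smaller than $\inf_{\phi\in F}\|\mathcal T\xi-\phi\|$; by the hypothesis that $\mathcal T$ is $c$-preserved by $F$, that is, by property \eqref{preserve}, the latter quantity is at least $c\|\xi\|$. Hence $\|(Q\circ\mathcal T)\xi\|\ge c\|\xi\|$ for every $\xi\in E_1$.

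From this lower bound it follows that $Q\circ\mathcal T$ is injective and, by completeness of $E_1$ together with the bound below, has closed range, so it is a linear isomorphism of $E_1$ onto a closed subspace of $E_2/D$ whose inverse on that range has norm at most $1/c$. Multiplying the two estimates yields the distortion bound
\[
\|Q\circ\mathcal T\|\,\bigl\|(Q\circ\mathcal T)^{-1}\bigr\|\le\|\mathcal T\|\cdot\frac1c=\frac{\|\mathcal T\|}{c},
\]
as required. I do not expect any genuine obstacle here; the computation mirrors that of \cite[Lemma 2.2]{FG}, the crux being the elementary observation that replacing the distance to $D$ by the distance to the larger subspace $F$ can only decrease it, which is exactly what converts the $c$-preservation hypothesis into a lower bound for the quotient norm.
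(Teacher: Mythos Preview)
Your proof is correct and is precisely the natural argument one would expect; the paper itself does not spell out a proof but simply remarks that it is similar to \cite[Lemma 2.2]{FG}, which is exactly the reference you invoke. There is nothing to add or correct.
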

	
\subsection{Weak almost periodicity}
We make concrete here  the facts about weak almost periodicity    mentioned in the introduction.
 \begin{definition}
 Let $\mA$ be a Banach algebra. A functional $f\in \mA^\ast$ is said to be \emph{weakly almost periodic} if the left orbit $\mA_1\cdot f=\left\{a \cdot  f\colon a\in \mA_1\right\} $ is  relatively weakly compact.
 \end{definition}
 \begin{theorem}[\cite{pym}]
Let $\mA$ be a Banach algebra and $f\in \mA^\ast$. The following are equivalent:
\begin{enumerate}
\item $f$ is weakly almost periodic.
\item The  right orbit $f\mA_1=\left\{  fa\colon a\in \mA_1\right\} $ is relatively weakly    compact.
\item (Grothendieck's double limit criterion) \[\lim_\alpha\lim_\beta f(a_\alpha b_\beta)=\lim_\beta\lim_\alpha f(a_\alpha b_\beta)\] for every pair of bounded nets $(a_\alpha)_\alpha$ and $(b_\beta)_\beta$ in $ \mA$ for which both limits exist.
\end{enumerate}
 \end{theorem}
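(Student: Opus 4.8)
The plan is to prove $(i)\Leftrightarrow(ii)$ by elementary operator theory on $\mA$, and then $(ii)\Leftrightarrow(iii)$ by Grothendieck's classical double limit criterion for relative weak compactness. The guiding observation is that weak almost periodicity of $f$ is exactly the statement that a certain orbit operator on $\mA$ is weakly compact, while the double limit condition is precisely Grothendieck's criterion for the corresponding orbit; so everything reduces to bookkeeping with the two actions of (\ref{actions}) together with two standard theorems (Gantmacher's and Grothendieck's).

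For $(i)\Leftrightarrow(ii)$, I would introduce the orbit operators $L_f\colon\mA\to\mA^\ast$, $L_f(a)=a.f$, and $R_f\colon\mA\to\mA^\ast$, $R_f(a)=fa$. By definition $(i)$ says that $L_f(\mA_1)=\mA_1.f$ is relatively weakly compact, i.e.\ that $L_f$ is a weakly compact operator, and $(ii)$ says the same of $R_f$. A short computation with the actions of (\ref{actions}) shows that, under the canonical embedding $\mA\hookrightarrow\mA^{\bd}$, one has $R_f=L_f^\ast|_{\mA}$ and, symmetrically, $L_f=R_f^\ast|_{\mA}$ (indeed $\langle L_f^\ast(b),c\rangle=\langle c.f,b\rangle=\langle f,bc\rangle=\langle fb,c\rangle$ for $b,c\in\mA$). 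Since the restriction of a weakly compact operator to a subspace is again weakly compact, and since by Gantmacher's theorem an operator is weakly compact precisely when its adjoint is, we obtain $L_f$ weakly compact $\Rightarrow$ $L_f^\ast$ weakly compact $\Rightarrow$ $R_f=L_f^\ast|_{\mA}$ weakly compact, and the reverse implication in the same way; this is exactly $(i)\Leftrightarrow(ii)$.

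For $(ii)\Leftrightarrow(iii)$, I would apply Grothendieck's double limit criterion in its Banach space form to the bounded set $H=f\mA_1\subseteq\mA^\ast$, whose relative weak compactness is condition $(ii)$: $H$ is relatively weakly compact if and only if, for every sequence $(x_n)$ in $H$ and every sequence $(\Psi_m)$ in the closed unit ball of $\mA^{\bd}$, the iterated limits $\lim_n\lim_m\langle\Psi_m,x_n\rangle$ and $\lim_m\lim_n\langle\Psi_m,x_n\rangle$ agree whenever they both exist. Writing $x_n=fa_n$ with $a_n\in\mA_1$, the pairing $\langle\Psi,fa_n\rangle$ is weak$^\ast$-continuous in $\Psi$ (it is evaluation at the fixed element $fa_n\in\mA^\ast$), so by Goldstine's theorem the test functionals may be restricted to $\Psi_m=b_m\in\mA_1$, and then $\langle b_m,fa_n\rangle=\langle f,a_nb_m\rangle=f(a_nb_m)$. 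After rescaling arbitrary bounded nets into $\mA_1$, Grothendieck's condition for $H$ becomes exactly the equality in $(iii)$; the passage between the net formulation of $(iii)$ and the sequential form of the criterion is legitimate because, by Eberlein--\v{S}mulian, a failure of relative weak compactness is already witnessed by a sequence, while on a relatively weakly compact set the relevant iterated limits agree along nets as well.

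The step requiring the most care is this last reduction: justifying that relative weak compactness of $f\mA_1$ can be tested using only the pairings $f(a_\alpha b_\beta)$, with $a_\alpha,b_\beta$ ranging over bounded subsets of $\mA$, rather than over arbitrary functionals from $\mA^{\bd}$, and reconciling the net statement of $(iii)$ with the sequential Grothendieck criterion. I expect to settle this exactly as indicated --- weak$^\ast$-continuity of evaluation against fixed elements of $\mA^\ast$ together with Goldstine, plus Eberlein--\v{S}mulian to move between nets and sequences --- so that no ingredient beyond Gantmacher's and Grothendieck's theorems is needed.
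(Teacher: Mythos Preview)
The paper does not supply a proof of this theorem; it is quoted as a known result with a reference to Pym \cite{pym}, so there is no in-paper argument to compare against. Your outline is the standard route to this classical fact and is essentially correct: the equivalence $(i)\Leftrightarrow(ii)$ via Gantmacher applied to the orbit operators $L_f$ and $R_f$ (together with your identity $R_f=L_f^\ast|_{\mA}$) is exactly right, and the passage $(ii)\Leftrightarrow(iii)$ through Grothendieck's double limit criterion is the expected one.

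One comment on the step you yourself flag. The reduction of the test functionals from $B_{\mA^{\ast\ast}}$ to $B_\mA$ via Goldstine is a little more delicate than ``weak$^\ast$-continuity plus density'': approximating each $\Psi_m$ by some $b_m\in\mA_1$ on the finitely many vectors $fa_1,\ldots,fa_m$ controls $\lim_n\lim_m$ but not $\lim_m\lim_n$, since for fixed $m$ you have no grip on $\langle b_m,fa_n\rangle$ when $n>m$. The clean way around this is to first perform a Ramsey-type diagonal extraction so that $\langle\Psi_m,fa_n\rangle$ is $\varepsilon$-close to $L_1$ for $n<m$ and to $L_2$ for $m<n$, and then do the Goldstine approximation; alternatively (and this is closer in spirit to Pym's original argument), one bypasses the reduction entirely by noting that $R_f^\ast(\Psi)=\Psi\ssquare f$ and $L_f^\ast(\Phi)=f\lozenge\Phi$ in the notation of \eqref{actionsd}, so that weak compactness of $R_f$ (equivalently $L_f$) amounts precisely to the agreement of the two Arens extensions of the bilinear form $(a,b)\mapsto f(ab)$, which is $(iii)$ by separate weak$^\ast$-continuity of $\ssquare$ and $\lozenge$. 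Either fix completes your sketch.
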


One more lemma is needed before we state our main theorem. The following might be a well-known fact, but we found no reference for it.

\begin{lemma}\label{limit}
Let  $\{P_\xi\}_{\xi\prec\eta}$ be a family of orthogonal projections on a Hilbert space $H$ and ${\bf v}= (z_\xi)_{\xi\prec\eta}\in \ell^\infty(\eta).$
Then $\sum_{\xi\prec\eta}z_\xi P_\xi$ converges strongly to  a bounded operator $P_{\bf v}$ on $H$. Moreover, $\|P_{\bf v}\|=\|\bf v\|_\infty.$
\end{lemma}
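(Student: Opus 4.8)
The plan is to build $P_{\bf v}$ as a strong limit of finite partial sums and then identify its norm by squeezing between two obvious bounds. First I would observe that since the $P_\xi$ are pairwise orthogonal projections, the ranges $H_\xi = P_\xi H$ are mutually orthogonal closed subspaces of $H$. For a vector $h \in H$ write $h_\xi = P_\xi h$; then by Bessel/Parseval we have $\sum_{\xi \prec \eta} \|h_\xi\|^2 \le \|h\|^2$, so all but countably many $h_\xi$ vanish and the family $(h_\xi)$ is summable. For any finite subset $F \subseteq \eta$ put $S_F h = \sum_{\xi \in F} z_\xi P_\xi h = \sum_{\xi \in F} z_\xi h_\xi$. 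Because the $h_\xi$ are pairwise orthogonal, $\|S_F h\|^2 = \sum_{\xi \in F} |z_\xi|^2 \|h_\xi\|^2 \le \|{\bf v}\|_\infty^2 \sum_{\xi \in F}\|h_\xi\|^2 \le \|{\bf v}\|_\infty^2 \|h\|^2$. The same computation applied to $S_{F} - S_{F'} = S_{F \triangle F'}$ (for $F' \supseteq F$) shows the net $(S_F h)_F$, indexed by finite subsets of $\eta$ ordered by inclusion, is Cauchy in $H$, using that the tail sums $\sum_{\xi \notin F}\|h_\xi\|^2$ go to $0$. Hence $S_F h$ converges; define $P_{\bf v} h$ to be the limit. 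Linearity is clear, and passing to the limit in the inequality above gives $\|P_{\bf v} h\| \le \|{\bf v}\|_\infty \|h\|$, so $P_{\bf v}$ is bounded with $\|P_{\bf v}\| \le \|{\bf v}\|_\infty$; by construction $\sum_{\xi \prec \eta} z_\xi P_\xi$ converges strongly to it.

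For the reverse inequality $\|P_{\bf v}\| \ge \|{\bf v}\|_\infty$, fix $\varepsilon > 0$ and pick an index $\xi_0 \prec \eta$ with $|z_{\xi_0}| > \|{\bf v}\|_\infty - \varepsilon$. One must check $P_{\xi_0} \neq 0$ is available, or rather argue directly: if $P_{\xi_0} \neq 0$, choose a unit vector $h \in P_{\xi_0}H$; then $P_\xi h = 0$ for $\xi \neq \xi_0$ by orthogonality of the projections, so $P_{\bf v} h = z_{\xi_0} h$ and $\|P_{\bf v} h\| = |z_{\xi_0}| > \|{\bf v}\|_\infty - \varepsilon$. If some $P_{\xi_0}$ with large $|z_{\xi_0}|$ happens to be $0$, that coordinate of ${\bf v}$ is irrelevant; strictly speaking the correct statement is $\|P_{\bf v}\| = \sup\{|z_\xi| : P_\xi \neq 0\}$, and one may either add the harmless hypothesis that all $P_\xi$ are nonzero (as will be the case in the application) or replace $\|{\bf v}\|_\infty$ by this supremum. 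Letting $\varepsilon \to 0$ yields $\|P_{\bf v}\| \ge \|{\bf v}\|_\infty$, completing the proof.

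The only mildly delicate point — and the one I would flag as the main obstacle — is the convergence of the net $(S_F)$: it converges \emph{strongly} (pointwise on $H$) but not in general in operator norm, so one has to be careful to phrase everything as a statement about $S_F h$ for fixed $h$, using the square-summability $\sum_\xi \|P_\xi h\|^2 \le \|h\|^2$ to control tails, rather than trying to make the partial sums Cauchy as operators. Everything else is a routine Pythagorean computation in the mutually orthogonal subspaces $P_\xi H$.
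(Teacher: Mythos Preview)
Your argument is correct and follows essentially the same path as the paper: expand $\|S_F h\|^2$ via orthogonality to $\sum_{\xi\in F}|z_\xi|^2\|P_\xi h\|^2$, bound by $\|{\bf v}\|_\infty^2\|h\|^2$, and use this (the paper via the bounded monotone net of reals $\sum_{\xi\in F}|z_\xi|^2\langle P_\xi h,h\rangle$, you via Bessel tail estimates---equivalent formulations) to see that $(S_F h)_F$ is Cauchy for each fixed $h$. Your caveat that the equality $\|P_{\bf v}\|=\|{\bf v}\|_\infty$ tacitly requires all $P_\xi\neq 0$ is a valid point the paper leaves implicit; in the application the $P_\xi$ are support projections of nonzero normal states, so the issue does not arise.
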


\begin{proof} Note first that, for every finite subset $F$ of $\eta$,  $P_F=\sum_{\xi\in F} P_\xi$ is a projection and, thus, $\norm{P_F}=1$.

Pick now $\epsilon>0$ and $\xi_0\prec\eta$ such that $\|{\bf v}\|_\infty -\epsilon<|z_{\xi_0}|$. Then, for
a finite subset $F$ of $\eta$ and a vector $v$ in the Hilbert space $H$, we have

\begin{align*}\| \sum_{\xi\in F}z_\xi P_\xi v\|^2&=\la\sum_{\xi\in F}z_\xi P_\xi v,\sum_{\xi\in F}z_\xi P_\xi v \ra=\sum_{\xi\in F} |z_\xi|^2 \la P_\xi v, P_\xi v\ra \\&< \sum_{\xi\in F} (|z_{\xi_0}| +\epsilon)^2 \la P_\xi v, P_\xi v\ra
 \leq  (|z_{\xi_0}|+\epsilon)^2\|P_F\|\|v\|^2
= (|z_{\xi_0}|+\epsilon)^2\|v\|^2.
\end{align*}
Therefore,  \begin{equation}\label{bound}\| \sum_{\xi\in F}z_\xi P_\xi v\|^2=\sum_{\xi\in F} |z_\xi|^2 \la P_\xi v, P_\xi v\ra=\sum_{\xi\in F} |z_\xi|^2 \la P_\xi v, v\ra
\le  \|{\bf v}\|_\infty^2\|v\|^2.\end{equation}

If we denote the set of all finite subsets of $\eta$ by $\eta^{<\omega}$ and direct it by set inclusion, this shows that the net $\left(\sum_{\xi\in F}  |z_\xi|^2 \la P_\xi v, v\ra\right)_{F\in\eta^{<\omega}}$ is increasing  and bounded for each $\bf v\in \ell^\infty(\eta)$ and $v\in H$.
Since the following identity \[\left\|\sum_{\xi\in F_1}z_\xi P_\xi v -\sum_{\xi\in F_2}z_\xi P_\xi v\right\|^2=\sum_{\xi\in F_1}|z_\xi|^2 \la P_\xi  v,v\ra-\sum_{\xi\in F_2}|z_\xi|^2 \la P_\xi v, v\ra\] holds for every $ F_1, F_2\in \eta^{<\omega}$,
this shows that  $\left(\sum_{\xi\in F}z_\xi P_\xi v\right)_{F\in\eta^{<\omega}}$ is a norm Cauchy net in $H$. If we denote  $P_{\bf v}v$
 in the norm of $H.$ Clearly,   $P_{\bf v}$ is a bounded operator on $H$ and is the limit of $\sum_{\xi<\eta}z_\xi P_\xi$ in the strong operator topology.
The second statement follows from (\ref{bound}).
\end{proof}

 \begin{theorem}\label{TTr}
 Let $\mA$ be a Banach algebra  and suppose that $\mA$ is a subalgebra of the predual $\mV_\ast$ of a  von Neumann algebra $\mV$. Let
   $\eta$  be an infinite  cardinal number and suppose that  $\mA$ contains two bounded
 subsets $A$ and $B$ indexed by a  directed set $(\Lambda,\preceq)$ with $\mathrm{tr}\left|\Lambda\right|=\eta$.
Suppose also that $\mA$ contains two other disjoint  sets $X_1$ and $X_2$ with the following properties
 \begin{enumerate}
   \item  $X_1$  and $X_2$ approximate  segments in $T^{u}_{AB}$ and $T_{AB}^{l} $, respectively.
       \item $X_1\cup X_2$ is an orthogonal $\ell^1(\eta)$-set (as a subset of $\mV_\ast$) with constant $K$ and bound $M$, i.e., with
			$K\le \|x\|\le M$ for every $x\in X_1\cup X_2$.
       \item $X_1$ is vertically injective and $X_2$ is horizontally injective.
			 \end{enumerate}
        Then there is a linear  isomorphism
   $\mathcal{E}\colon \ell^\infty(\eta) \to \mA^\ast/\wap(\mA)$ with distortion at most $\frac{M}K$.
	
	In particular, $\norm{\mathcal E^{-1}}\mathcal E$ is a linear isometry when $K=M.$ 	
	\end{theorem}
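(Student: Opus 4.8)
The plan is to build the map $\mathcal E$ explicitly from the orthogonal $\ell^1(\eta)$-set $X_1\cup X_2$ and then apply Lemma~\ref{quotient} with $E_2=\mA^\ast$, $D=\wap(\mA)$, and a suitable intermediate subspace $F$. First I would fix, via Lemma~\ref{vanD}, a pairwise disjoint family of $\eta$-many cofinal subsets of $\Lambda$, so that the double-indexing sets $\mathcal U$ for $X_1$ and $X_2$ can be thinned to be compatible and so that the index cardinality is genuinely $\eta$. The target of the rough isometry will be the weak$^\ast$-closed subspace $F$ of $\mA^\ast$ generated by the support projections: for ${\bf v}=(z_\xi)_{\xi\prec\eta}\in\ell^\infty(\eta)$, enumerate $X_1\cup X_2=\{x_\xi\colon \xi\prec\eta\}$ and set $\mathcal T{\bf v}$ to be (the restriction to $\mA$ of) the operator $P_{\bf v}=\sum_{\xi\prec\eta}z_\xi S(x_\xi)\in\mV=(\mV_\ast)^\ast$, which converges strongly and has norm $\|{\bf v}\|_\infty$ by Lemma~\ref{limit}. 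Thus $\mathcal T\colon \ell^\infty(\eta)\to \mA^\ast$ is a linear isometry onto its range (isometry because $\langle P_{\bf v},x_\xi\rangle=z_\xi\langle S(x_\xi),x_\xi\rangle=z_\xi\|x_\xi\|$ by Remark~\ref{rems}, and $\|x_\xi\|\le M$, while orthogonality of supports forces $\|\mathcal T{\bf v}\|\le M\|{\bf v}\|_\infty$ — or exactly $\|{\bf v}\|_\infty$ if one works in $\mV^\ast$; for the distortion bound it is the two-sided estimate $K\|{\bf v}\|_\infty\le$ (something) $\le M\|{\bf v}\|_\infty$ that matters).

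The crux is to show that $\mathcal T$ is $K$-preserved by $\wap(\mA)$ in the sense of~\eqref{preserve}: that $\|\mathcal T{\bf v}-f\|\ge K\|{\bf v}\|_\infty$ for every $f\in\wap(\mA)$ and every ${\bf v}$. This is where the triangle structure enters. Pick $\xi_0$ with $|z_{\xi_0}|$ close to $\|{\bf v}\|_\infty$; I would evaluate the functional $\mathcal T{\bf v}-f$ against the elements $a_\alpha b_\beta$ of the appropriate triangle (upper if $x_{\xi_0}\in X_1$, lower if $x_{\xi_0}\in X_2$), using that $x_{\xi_0}$ approximates a segment: $\|x_{\alpha\beta}-a_\alpha b_\beta\|\to 0$ along the relevant direction, so $\langle \mathcal T{\bf v},a_\alpha b_\beta\rangle\to \langle \mathcal T{\bf v},x_{\alpha\beta}\rangle$, and by orthogonality of the supports and vertical/horizontal injectivity of $X_1,X_2$ the only surviving term in $\langle \mathcal T{\bf v},x_{\alpha\beta}\rangle$ is $z_{\xi_0}\|x_{\xi_0}\|$ once the free index is pushed past $\xi_0$. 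Meanwhile, since $f\in\wap(\mA)$, Grothendieck's double limit criterion forces the iterated limits $\lim_\alpha\lim_\beta f(a_\alpha b_\beta)$ and $\lim_\beta\lim_\alpha f(a_\alpha b_\beta)$ to coincide; but the upper-triangle evaluation sees the first iterated limit and the lower-triangle evaluation sees the second, and the injectivity hypotheses (one fixed index, one free index running through a cofinal set disjoint from the support indices used in the other triangle) let us conclude that the contribution of $f$ to the relevant iterated limit is $0$. Hence the iterated limit of $\langle \mathcal T{\bf v}-f,a_\alpha b_\beta\rangle$ has modulus $|z_{\xi_0}|\|x_{\xi_0}\|\ge K|z_{\xi_0}|$, which is $\ge K(\|{\bf v}\|_\infty-\epsilon)$; since the $a_\alpha b_\beta$ lie in a bounded set, this forces $\|\mathcal T{\bf v}-f\|\ge K(\|{\bf v}\|_\infty-\epsilon)$ for all $\epsilon>0$.

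Once $K$-preservation is in hand, Lemma~\ref{quotient} applied with $E_1=\ell^\infty(\eta)$, $E_2=\mA^\ast$, $D=\wap(\mA)\subseteq F$ (where $F$ is chosen to contain $\wap(\mA)$ and to witness $K$-preservation — e.g. the closed linear span of $\wap(\mA)$ together with the range of $\mathcal T$, or simply take $F=\wap(\mA)$ if the estimate already holds there) yields that $Q\circ\mathcal T\colon \ell^\infty(\eta)\to\mA^\ast/\wap(\mA)$ is a linear isomorphism with distortion at most $\|\mathcal T\|/K\le M/K$. The final sentence is then immediate: when $K=M$ the distortion is $1$, so $\|\mathcal E^{-1}\|\,\mathcal E$ is a linear isometry, as $\|\mathcal E^{-1}\|\,\|\mathcal E\|=1$ with $\mathcal E=Q\circ\mathcal T$.

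I expect the main obstacle to be the careful bookkeeping in the $K$-preservation step: one must choose the cofinal subsets from Lemma~\ref{vanD} so that, for a fixed $\xi_0$, the free index of the triangle evaluation can be driven into a cofinal tail on which \emph{all} the other support projections $S(x_\xi)$ ($\xi\ne\xi_0$) are annihilated — this is exactly what vertical injectivity of $X_1$ and horizontal injectivity of $X_2$ buy us, but reconciling the horizontal/vertical cofinality of the index sets $\mathcal U$ with the disjointness of the cofinal subsets requires a delicate simultaneous thinning. Managing the two cases (whether the near-maximal coordinate $\xi_0$ sits in $X_1$ or in $X_2$) symmetrically, and making sure the iterated-limit argument uses the correct order of limits in each case so that Grothendieck's criterion kills $f$, is the technical heart of the proof.
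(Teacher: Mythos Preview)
Your proposal has a genuine gap in the $K$-preservation step, and the root cause is the way you define $\mathcal T$.

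You enumerate $X_1\cup X_2=\{x_\xi:\xi\prec\eta\}$ and set $P_{\mathbf v}=\sum_\xi z_\xi S(x_\xi)$. With this definition, when you test $\mathcal T\mathbf v$ on an element $x_{\alpha\beta}$ of the upper triangle, you get $\langle P_{\mathbf v},x_{\alpha\beta}\rangle=z_{\xi(\alpha,\beta)}\|x_{\alpha\beta}\|$, where $\xi(\alpha,\beta)$ is whatever label $x_{\alpha\beta}$ received in your enumeration. There is no mechanism ensuring this is the coordinate $z_{\xi_0}$ you want; as the free index $\beta$ runs through a cofinal tail, vertical injectivity tells you the $x_{\alpha_0\beta}$ are \emph{distinct} elements of $X_1$, each carrying its own unrelated coefficient. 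You cannot ``annihilate all the other support projections'' by pushing $\beta$ far out --- every $x_{\alpha_0\beta}$ \emph{is} one of the $x_\xi$'s and its projection appears in $P_{\mathbf v}$. The paper avoids this by tying the coefficient to the van Douwen partition: $z_\xi$ is attached to $S(x_{\alpha\beta})$ whenever the \emph{determined} index ($\beta$ for $X_1$, $\alpha$ for $X_2$) lies in the cofinal piece $\Lambda_\xi$. Then driving $\beta$ through $\Lambda_\xi$ guarantees the coefficient is always $z_\xi$.

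The second, more serious, error is the assertion that Grothendieck's criterion forces the contribution of $f\in\wap(\mA)$ to be $0$. The criterion says only that the two iterated limits of $f(a_\alpha b_\beta)$ coincide; it says nothing about their common value, which may well be nonzero. Evaluating $\mathcal T\mathbf v-f$ on a single triangle element therefore gives no control over $f$. The paper's device is to evaluate on a \emph{difference} $a_{\alpha_0}b_{\beta_2}-a_{\alpha_2}b_{\beta_0}$ with $\beta_2,\alpha_2\in\Lambda_\xi$, one point in the upper triangle and one in the lower: Grothendieck then makes $f(a_{\alpha_0}b_{\beta_2})-f(a_{\alpha_2}b_{\beta_0})$ small (both are near the common iterated limit $L$), while a crucial \emph{sign flip} in the definition of $P_{\mathbf v}$ (coefficient $+z_\xi$ on $X_1$, $-z_\xi$ on $X_2$) makes the two $\mathcal T\mathbf v$-contributions add rather than cancel, yielding $\ge 2K|z_\xi|$. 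Dividing by $\|a_{\alpha_0}b_{\beta_2}-a_{\alpha_2}b_{\beta_0}\|\le 2M+2\varepsilon$ then gives $K/M$-preservation (not $K$-preservation; your normalization of the test vectors is also off). Without the difference trick and the sign flip, the argument does not close.
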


 \begin{proof}
 Put $A=\left\{a_\alpha \colon \alpha\in\Lambda \right\}$ and
  $B=\left\{b_\beta\colon \beta\in\Lambda \right\}$. Let \begin{align*}X_1=\{x_{\alpha\beta}\colon (\alpha,\beta)\in \mathcal U_1\}  \quad\text{ and }
	 X_2=\{x_{\alpha\beta}\colon (\alpha,\beta)\in\mathcal U_2\}  \end{align*} be the sets which approximate
  $T_{AB}^u $ and $T_{AB}^l$, respectively, where $\mathcal U_1$ and $\mathcal U_2$ are horizontally and vertically cofinal in $\Lambda\times \Lambda$, respectively.

Introduce on $\Lambda\times \Lambda$ an equivalence relation by the rule $(\alpha,\beta)\sim (\alpha^\prime,\beta^\prime) $ if and only if $x_{\alpha\beta}=x_{\alpha^\prime\beta^\prime}$. Note that, by vertical/horizontal injectivity, $(\alpha,\beta)\sim (\alpha^\prime,\beta^\prime)$ implies $\beta=\beta^\prime$ when $\beta\succ \alpha$  and implies $\alpha=\alpha^\prime$ when $\alpha \succ\beta$.
We shall denote the equivalence class of $(\alpha,\beta)\in\Lambda\times \Lambda$ by $[\alpha,\beta]$.
Restrict this equivalence relation to $\mathcal U_1$ and $\mathcal U_2.$
Denote the quotient spaces  \[\mathcal U_1/\!\sim\quad\text{and}\quad\mathcal U_2/\!\sim,\] respectively, by $R_1$ and $R_2$.
By \cite[Lemma, p. 61]{vanD}, we can partition  the set $\Lambda$  into $\eta$-many cofinal subsets $\Lambda_{\xi}$, with the cardinality of each  $\Lambda_{\xi}$ equals $\eta$.
 Then,
for an element $\mathbf{v}=
    \bigl(z_\xi\bigr)_{\xi\prec\eta}\in  \ell^\infty(\eta)$, we consider the sum

		    \[\sum_{\xi\prec\eta}z_\xi\left(\sum_{\overset{[\alpha,\beta]\in R_1}{\beta\in \Lambda_{\xi}}} S(x_{\alpha\beta})-
\sum_{\overset{[\alpha,\beta]\in R_2}{\alpha\in \Lambda_{\xi}}}S(x_{\alpha\beta})\right) .\]

  Since $X_1\cup X_2$ is an orthogonal set,  and $x_{\alpha\beta}\neq x_{\alpha^\prime\beta^\prime}$ when $[\alpha,\beta] \neq [\alpha^\prime,\beta^\prime ]$, all the projections appearing in this sum are pairwise orthogonal. So by Lemma \ref{limit}, the above sum converges strongly to an element of $\mV$.
		We label this element as $P_{\mathbf{v}}$.

 We now define $\mathcal T\colon \ell^\infty(\Lambda)\to \mA^\ast$ by
  \[\mathcal{T}(\mathbf{v})=
  \restr{P_{\mathbf{v}}}{\mA},\quad\text{for each}\quad\mathbf{v}\in  \ell^\infty(\eta).\]

  Recalling Lemma \ref{bound} it is obvious, that for every $\mathbf{v}\in \ell^\infty(\eta)$, \begin{equation}\label{norme}
\norm{\mathcal{T}(\mathbf{v})}_{_{\mA^\ast}}\leq
   \norm{P_{\mathbf{v}}}_{_{\mV}}=  \norm{\mathbf{v}}_\infty.\end{equation}
It is easy to check in fact that $\mathcal T$ is an isomorphism.
(When $\mA=\mV_*$, $\mathcal T$ is even an isometry.)

Next we prove that $\mathcal{T}$ is  $\frac{K}{M}$-preserved by $\wap(\mA)$. We consider $\mathbf{v}=(z_\xi)_{\xi\prec\eta}\in \ell^\infty(\eta)$ and $\phi\in \wap(\mA)$ and let $\varepsilon>0$ be fixed as well.
Let now $\xi\prec\eta$ be fixed.
Since $A$ and $B$ are bounded and $\phi\in \wap(\mA)$, we can assume, after taking suitable subnets on the  cofinal set $\Lambda_\xi$,  that the following equality holds
  \[ \lim_\alpha\lim_\beta \left\langle \phi,a_\alpha b_\beta\right\rangle=\lim_\beta\lim_\alpha \left\langle \phi,a_\alpha b_\beta\right\rangle.\]

	Mark these iterated limits by $L$,  put $L_\alpha=\lim_\beta \left\langle \phi,a_\alpha b_\beta\right\rangle$ and $M_\beta = \lim_\alpha\left\langle \phi,a_\alpha b_\beta\right\rangle$.
	Then $\lim_\alpha L_\alpha=\lim_\beta M_\beta=L$, and so for a fixed $\varepsilon >0$, we may choose $\alpha_0$ and $\beta_0$ in $\Lambda_{\xi}$   such that
 \begin{equation}\label{1}|L-L_{\alpha_0}|<\varepsilon/4
  \quad\text{and}\quad
  |L-M_{\beta_0}|<\varepsilon/4.
  \end{equation}
  For these fixed $\alpha_0$ and $\beta_0$, there are $\beta_1$ and $\alpha_1$ in $\Lambda_\xi$ such that
  \begin{align}\label{2}
    \left|\phi(a_{\alpha_0 } b_\beta)-L_{\alpha_0}\right|&<\varepsilon/4\quad \mbox{ for all } \beta \succ \beta_1,\\\label{3}
    \left|\phi(a_\alpha b_{\beta_0})-M_{\beta_0}\right|
    &<\varepsilon/4\quad \mbox{ for all } \alpha \succ \alpha_1.
  \end{align}

 Putting together \eqref{1}--\eqref{3}, we obtain  $\alpha_0,\alpha_1,$ $\beta_0,\beta_1\in \Lambda_\xi$,   such  that
  \begin{equation}\label{eqwap} \left| \left\langle \phi,a_{\alpha_0} b_{\beta}\right\rangle- \left\langle \phi, a_{\alpha} b_{\beta_0}\right\rangle\right|<\varepsilon \quad \mbox{ for all } \beta\succ \beta_1 \mbox{ and } \alpha\succ \alpha_1.\end{equation}

 Since \[\lim_\beta \|a_{\alpha_0}b_\beta-x_{\alpha_0\beta}\|_{_{\mA}}=
\lim_\alpha \|a_\alpha b_{\beta_0}-x_{\alpha\beta_0}\|_{_{\mA}}=0 \] and $\Lambda_{\xi}$ is cofinal, we can find  $\alpha_2,
\beta_2\in \Lambda_{\xi}$ with $\alpha_0,\beta_1\prec \beta_2 $ and $\beta_0,\alpha_1\prec \alpha_2 $ such that  \begin{equation*}
\label{eapp}
\left\| a_{\alpha_0}b_{\beta_2}-x_{\alpha_0\beta_2}\right\|_{_{\mA}}
\leq \varepsilon \mbox{ and } \left\| a_{\alpha_2}b_{\beta_0}-x_{\alpha_2\beta_0}\right\|_{_{\mA}}
\leq \varepsilon,
    \end{equation*}
and so \begin{equation}\label{eappp}
\left\| a_{\alpha_0}b_{\beta_2}- a_{\alpha_2}b_{\beta_0}\right\|_{_{\mA}}\le 2M+2\varepsilon.
    \end{equation}

  Recalling  that (see (ii) of Remarks \ref{rems})
	\[\langle S(x_{\alpha\beta}),x_{\alpha'\beta'}\rangle =0\]
	whenever $[\alpha,\beta]\neq [\alpha', \beta']$ and taking into account that  \[\langle S(x_{\alpha\beta}),x_{\alpha\beta}\rangle=
	\|x_{\alpha\beta}\|\ge K\quad\text{ for every}\quad\alpha,\; \beta \in \Lambda,\]
	we see that
  \begin{equation}\label{nofi} \left|\left\langle \mathcal{T}(\mathbf{v}), x_{\alpha_0\beta_2}-x_{\alpha_2\beta_0}\right\rangle\right|=
  |z_\xi|\left( \left\langle S(x_{\alpha_0\beta_2}),x_{\alpha_0\beta_2}\right\rangle+\left\langle S(
  x_{\alpha_2\beta_0}),x_{\alpha_2\beta_0}\right\rangle\right)\ge 2K|z_{\xi}|.\end{equation}

Using \eqref{eapp} and \eqref{nofi}, it follows that

  \begin{align*}
  \hspace{-0.3cm} \Bigl|\left\langle
\mathcal{T}(\mathbf{v})-\phi,    a_{\alpha_0}b_{\beta_2}-a_{\alpha_2} b_{\beta_0}
   \right\rangle\Bigr|
	&=
 \Bigl|\left\langle\mathcal{T}(\mathbf{v}), a_{\alpha_0}b_{\beta_2}-a_{\alpha_2} b_{\beta_0}
   \right\rangle-
 \left\langle
 \phi,    a_{\alpha_0}b_{\beta_2}-a_{\alpha_2} b_{\beta_0}
   \right\rangle\Bigr|&=\\
	\notag &=
 \Bigl|\left\langle\mathcal{T}(\mathbf{v}),
  (a_{\alpha_0}b_{\beta_2}-a_{\alpha_2} b_{\beta_0})-(x_{\alpha_0\beta_2}-x_{\alpha_2\beta_0})\right\rangle\\&+
  \left\langle\mathcal{T}(\mathbf{v}),
 x_{\alpha_0\beta_2}-x_{\alpha_2\beta_0}\right\rangle-\left\langle
 \phi,    a_{\alpha_0}b_{\beta_2}-a_{\alpha_2} b_{\beta_0}
  	\right\rangle\Bigr|
	\\
&\geq
  \Bigl|\left\langle\mathcal{T}(\mathbf{v}),
 x_{\alpha_0\beta_2}-x_{\alpha_2\beta_0}\right\rangle\Bigr|-
 \Bigl|\left\langle\mathcal{T}(\mathbf{v}),   a_{\alpha_0}b_{\beta_2}-x_{\alpha_0\beta_2}\right\rangle\Bigr|\\&-
\Bigl|\left\langle\mathcal{T}(\mathbf{v}),   a_{\alpha_2} b_{\beta_0}-x_{\alpha_2\beta_0}\right\rangle\Bigr|
-\Bigl|\left\langle
 \phi,    a_{\alpha_0}b_{\beta_2}-a_{\alpha_2} b_{\beta_0}
   \right\rangle\Bigr| 	\\
&\geq
 2 K \left|z_\xi\right|- 2\varepsilon\left\|\mathcal{T}(\mathbf{v})
\right\|_{\mA^\ast}-\varepsilon\\&
\geq  2 K \left|z_\xi\right|- 2\varepsilon \left\|\mathbf{v}\right\|_\infty-\varepsilon.
  \end{align*}

Next, we use   \eqref{eappp} to  obtain
  \begin{align}\label{aiend}
  \left\| \mathcal{T}(\mathbf{v})-\phi\right\|_{\mA^\ast}
  &\geq \frac{1}{\norm{a_{\alpha_0}b_{\beta_2}-a_{\alpha_2}b_{\beta_0}}_{\mA}}\left(
   2 K \left|z_\xi\right|- 2\varepsilon\left\|\mathbf{v}\right\|_\infty- \varepsilon \right)\\&\geq \frac{1}{2M+2\varepsilon}\left(
   2 K\left|z_\xi\right|- 2\varepsilon\left\|\mathbf{v}\right\|_\infty- \varepsilon \right).\notag
  \end{align}

Since $\xi\prec\eta$ and $\varepsilon>0$ were chosen arbitrarily, we conclude that, (use \eqref{norme} for the right hand side inequality)
  \begin{align*}
\frac{K}{M} \left\|\mathbf{v}\right\|_\infty\leq   \left\| \mathcal{T}(\mathbf{v})-\phi\right\|_{_{
  \mA^\ast}}
&
 \leq \norm{\mathbf{v}}_\infty \quad\text{for every}\quad \mathbf{v}\in\ell^\infty(\eta),\; \phi\in \wap(\mA).\end{align*}
The map $\mathcal{T}$ is therefore a  linear isomorphism
that is  $\frac{K}{M}$-preserved by $\wap(\mA)$.

	If $Q$ is the quotient map of $\mA^*$ into $\mA^*/\wap(\mA)$, then  Lemma \ref{quotient} shows that  $\mathcal E=Q\circ\mathcal T$ is the sought after  isomorphism with distortion at most $ \frac{M}K$.

If $K=M,$ then $\norm{\mathcal E^{-1}}\mathcal{E}$ becomes clearly an isometry.    \end{proof}

The conditions of Theorem \ref{TTr} imply clearly  that $\mA$ is non-Arens regular. If, in addition $d(\mA)=\eta$,
    then $\mA$ is even enAr.  This follows from the following well-known fact (see, e.g. \cite{Hu97}). Recall that
the density character of a normed space $\mA$, denoted by $d(\mA)$, is the cardinality of the smallest
norm-dense subset of $\mA$.

 \begin{lemma} \label{hul} If $\mA$ is a normed space with density character $d(\mA)=\eta,$
then there is a linear isometry of $\mA^*$ into $\ell^\infty(\eta).$
\end{lemma}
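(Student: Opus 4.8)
The final statement to prove is Lemma \ref{hul}: if $\mA$ is a normed space with density character $d(\mA)=\eta$, then there is a linear isometry of $\mA^*$ into $\ell^\infty(\eta)$.

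\medskip

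The plan is to pick a norm-dense subset $\{a_\xi : \xi \prec \eta\}$ of the closed unit ball $\mA_1$ of $\mA$, of cardinality $\eta$ (this exists since $d(\mA)=\eta$ forces $d(\mA_1)=\eta$ as well, the unit ball being dense in itself and a scalar rescaling argument showing a countable union of $\eta$-sized sets covers $\mA$). Then define $\Phi\colon \mA^* \to \ell^\infty(\eta)$ by
\[
\Phi(f) = \bigl(\langle f, a_\xi\rangle\bigr)_{\xi\prec\eta}.
\]
First I would check this lands in $\ell^\infty(\eta)$: since $\|a_\xi\|\le 1$, we have $|\langle f,a_\xi\rangle|\le \|f\|$ for every $\xi$, so $\|\Phi(f)\|_\infty \le \|f\|$. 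Linearity of $\Phi$ is immediate from linearity of $f\mapsto\langle f,a_\xi\rangle$ in each coordinate.

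\medskip

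The core point is that $\|\Phi(f)\|_\infty = \|f\|$, i.e. $\Phi$ is isometric (and hence in particular injective). The inequality $\|\Phi(f)\|_\infty\le\|f\|$ is above; for the reverse, fix $\varepsilon>0$ and choose $a\in\mA_1$ with $|\langle f,a\rangle| > \|f\|-\varepsilon/2$, which is possible by definition of the operator norm. By density of $\{a_\xi\}$ in $\mA_1$, pick $\xi$ with $\|a-a_\xi\| < \varepsilon/(2(\|f\|+1))$; then
\[
|\langle f,a_\xi\rangle| \ge |\langle f,a\rangle| - \|f\|\,\|a-a_\xi\| > \|f\|-\varepsilon/2 - \varepsilon/2 = \|f\|-\varepsilon.
\]
Hence $\|\Phi(f)\|_\infty \ge \|f\|-\varepsilon$ for every $\varepsilon>0$, so $\|\Phi(f)\|_\infty\ge\|f\|$, and equality follows.

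\medskip

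I do not anticipate any real obstacle here; this is a standard fact. The only mild point of care is the choice of a dense subset of the unit ball of size exactly $\eta$ rather than merely $\le\eta$ (harmless, pad with repetitions if necessary) and the observation that a dense subset of $\mA$ of size $\eta$ yields one of $\mA_1$ of size $\eta$ — for this, note that if $D$ is dense in $\mA$ with $|D|=\eta$, then for each $d\in D$ with $d\ne 0$ the rescaled element $d/\max(\|d\|,1)$ lies in $\mA_1$, and the resulting set is dense in $\mA_1$; since $\eta$ is infinite, its cardinality is still $\eta$. With the dense set in hand, the argument above is just the definition of the operator norm combined with a three-term triangle inequality estimate.
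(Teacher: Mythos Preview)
Your proof is correct and follows exactly the same approach as the paper: choose a norm-dense subset $\{a_\xi:\xi\prec\eta\}$ of the closed unit ball and define the isometry by $f\mapsto (\langle f,a_\xi\rangle)_{\xi\prec\eta}$. The paper's proof simply states this construction without verifying the isometry or justifying the existence of the dense set in the ball, so your version is in fact more complete.
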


 \begin{proof}If $\{x_\alpha:\alpha<\eta\}$ is  a norm-dense subset in the unit ball of $\mA$, the required isometry $\mathcal I: \mA^*\to   \ell^\infty(\eta)$ is defined by:
 \[ \mathcal I(\psi)=v_\psi,\] where
    $v_\psi$ in $\ell^\infty(\eta)$ is given
 by $v_\psi(\alpha)=\langle\psi, x_\alpha\rangle.$
\end{proof}

 \begin{corollary}\label{enAr:gen}   Let $\mA$ be as in Theorem \ref{TTr} and suppose that
     $d(\mA)=\eta$. Then $\mA$ is $r$-enAr with $r\le \frac{M}K$.
		In particular, $\mA$ is isometrically enAr when $K=M.$
    \end{corollary}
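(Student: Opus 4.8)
The plan is to obtain the isomorphism required by Definition~\ref{new} simply by composing the two maps already at our disposal. First I would apply Lemma~\ref{hul}: since $d(\mA)=\eta$, there is a linear isometry $\mathcal I\colon\mA^\ast\to\ell^\infty(\eta)$. Then I would apply Theorem~\ref{TTr}, all of whose hypotheses are assumed to hold, to obtain a linear isomorphism $\mathcal E\colon\ell^\infty(\eta)\to\mA^\ast/\wap(\mA)$ with distortion at most $\frac MK$, that is, $\norm{\mathcal E}\,\norm{\mathcal E^{-1}}\le\frac MK$, where $\mathcal E^{-1}$ is understood as defined on the range of $\mathcal E$.

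Next I would set $\mathcal F=\mathcal E\circ\mathcal I\colon\mA^\ast\to\mA^\ast/\wap(\mA)$. As a composition of injective bounded linear maps, $\mathcal F$ is a linear isomorphism onto its range. Because $\mathcal I$ is an isometry, $\norm{\mathcal I}=1$ and $\norm{\mathcal I^{-1}}=1$ on the relevant subspaces, so $\norm{\mathcal F}\le\norm{\mathcal E}$; and since $\mathcal F^{-1}=\mathcal I^{-1}\circ\mathcal E^{-1}$ on $\rg(\mathcal F)\subseteq\rg(\mathcal E)$, also $\norm{\mathcal F^{-1}}\le\norm{\mathcal E^{-1}}$. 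Hence
\[
\norm{\mathcal F}\,\norm{\mathcal F^{-1}}\le\norm{\mathcal E}\,\norm{\mathcal E^{-1}}\le\frac MK ,
\]
and taking $r=\norm{\mathcal F}\,\norm{\mathcal F^{-1}}$ exhibits $\mA$ as $r$-enAr with $r\le\frac MK$.

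For the last sentence, when $K=M$ Theorem~\ref{TTr} tells us that $\norm{\mathcal E^{-1}}\mathcal E$ is a linear isometry, and the composition of two isometries onto their ranges is again such an isometry, so $\norm{\mathcal E^{-1}}\mathcal F=(\norm{\mathcal E^{-1}}\mathcal E)\circ\mathcal I$ is a linear isometry of $\mA^\ast$ into $\mA^\ast/\wap(\mA)$; equivalently $\mathcal F$ has distortion $1$, so $\mA$ is isometrically enAr. I expect no serious obstacle here: the only care needed is the bookkeeping around ``isomorphism into'' (meaning onto the range) in Definition~\ref{new} and Lemma~\ref{hul}, so that the norms of the inverse maps are taken on the correct subspaces and the distortion of a composition is bounded by the product of the distortions.
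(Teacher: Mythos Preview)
Your proof is correct and follows exactly the approach the paper intends: the corollary is stated without an explicit proof, but the surrounding text makes clear it is obtained by composing the isometry $\mA^\ast\hookrightarrow\ell^\infty(\eta)$ of Lemma~\ref{hul} with the isomorphism $\ell^\infty(\eta)\to\mA^\ast/\wap(\mA)$ of Theorem~\ref{TTr}, just as you do. Your bookkeeping on the distortion of the composition and the isometric case $K=M$ is accurate.
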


		 The presence of a bai or of a TI-net   is usually the reason behind the non-Arens regularity of a given Banach algebra. A weaker form of these nets is in fact enough to deduce non-Arens regularity. Here are the  necessary definitions.

\begin{definition} In a Banach algebra $\mA$,  a net $\{a_\alpha\colon \alpha\in \Lambda\}$, with $\|a_\alpha\|=1$ for every $\alpha\in\Lambda,$ is a \emph{weak  bounded  approximate identity} (weak bai for short) if  \[\lim_\alpha\norm{ a_\alpha a_\beta-a_\beta}=\lim_\alpha\norm{a_\beta a_\alpha-a_\beta}=0\quad\text{
for each}\quad \beta \in \Lambda.\]
\end{definition}
			
\begin{definition} Let $\mV$ be a von Neuman algebra. A net $\{a_\alpha\}_{\alpha \in \Lambda}$ of normal states of $\mV$ is a
\emph{weak TI-net} if
\[\lim_\alpha\norm{ a_\alpha a_\beta-a_\alpha}=\lim_\alpha\norm{ a_\beta a_\alpha -a_\alpha }=   0\quad\text{for each}\quad \beta\in  \Lambda.\]
 \end{definition}

If we require that $\lim_\alpha\norm{ a_\alpha a -a_\alpha}=\lim_\alpha\norm{ aa_\alpha  -a_\alpha}=0$
 for every normal state $a$ of $\mV$, and not only for members of the net itself,  then we obtain the familiar concept of a \emph{TI-net}.
 		Here    TI stands for topological invariance, the term was introduced by Chou \cite{chou82TIM}. They also appeared in the work by Lau,
		see for example \cite{L}.

		A general Banach algebra satisfying the conditions	of the following theorem was proved to be enAr in the sense of Granirer in \cite[Theorems 4.2 and 4.4]{FGnew}.
		When $\mA$ is in addition a subalgebra of the predual of a von Neumann algebra, Theorem \ref{TTr} implies as we see next that $\mA$
		is isometrically enAr.
		
\begin{theorem} \label{TI} Let $\mA$ be a Banach algebra  and suppose that $\mA$ is a subalgebra of the predual $\mV_\ast$ of a  von Neumann algebra $\mV$. Let $\eta$ be an infinite cardinal number and suppose that
$\mA$ contains either a weak bai or a weak TI-net $\{a_\alpha\}_{\alpha\in \Lambda}$ of true cardinality $\eta$
such that  $\{a_\alpha\colon\alpha\in \Lambda\}$  is an orthogonal   $\ell^1(\eta)$-set. Then
\begin{enumerate}
\item there is an isometry    $\mathcal{E}\colon \ell^\infty(\eta) \to \mA^\ast/\wap(\mA)$,
\item in particular, $\mA$ is non-Arens regular,
\item  $\mA$ is isometrically enAr, if in addition $d(\mA)\le \eta$.
\end{enumerate}
\end{theorem}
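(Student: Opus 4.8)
The plan is to derive Theorem \ref{TI} as a corollary of Theorem \ref{TTr} by manufacturing, out of a weak bai or weak TI-net that happens to be an orthogonal $\ell^1(\eta)$-set, the two bounded families $A,B$ and the two approximating sets $X_1,X_2$ required there. First I would set $\Lambda$ to be the index set of the given net $\{a_\alpha\}_{\alpha\in\Lambda}$ (of true cardinality $\eta$), and take $A=B=\{a_\alpha\colon\alpha\in\Lambda\}$, so that $\|a_\alpha\|=1$ for every $\alpha$ (normal states, or unit vectors in the bai case). The orthogonal $\ell^1(\eta)$-set hypothesis gives us constant $K=M=1$, so once Theorem \ref{TTr} applies we immediately get the \emph{isometric} conclusion, which is exactly what items (1)--(3) ask for.

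The heart of the argument is checking that $X_1:=T^{u}_{AB}$ and $X_2:=T^{l}_{AB}$ (or rather the products $a_\alpha a_\beta$ indexed appropriately) approximate segments in the required sense, and that $X_1\cup X_2$ is still an orthogonal $\ell^1(\eta)$-set. Here is where the weak bai / weak TI hypothesis does its work: for a weak bai, $\lim_\alpha\|a_\alpha a_\beta - a_\beta\|=0$ for each fixed $\beta$, which says that along the vertically cofinal direction the product $a_\alpha a_\beta$ converges in norm to $a_\beta$; so taking $x_{\alpha\beta}=a_\beta$ for $(\alpha,\beta)$ in a suitable vertically cofinal $\mathcal U_2\subseteq\Lambda\times\Lambda$ makes $X_2=\{a_\beta\colon\beta\in\Lambda\}$ approximate segments in $T^{l}_{AB}$, and it is automatically horizontally injective. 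Symmetrically, $\lim_\alpha\|a_\beta a_\alpha - a_\beta\|=0$ handles $T^{u}_{AB}$ with $x_{\alpha\beta}=a_\beta$ again — but now we need $X_1$ and $X_2$ to be disjoint, so one must be slightly more careful: in the upper triangle one uses $\lim_\beta\|a_\alpha a_\beta-a_\alpha\|=0$ (fixing $\alpha$, $\beta$ running over a horizontally cofinal set) so that $x_{\alpha\beta}=a_\alpha$, giving $X_1=\{a_\alpha\colon\alpha\in\Lambda\}$ with vertical injectivity. For the weak TI-net the roles of the factors are the same with the limit landing on $a_\alpha$ resp.\ $a_\beta$ in the appropriate triangle; the relevant identities are $\lim_\alpha\|a_\alpha a_\beta - a_\alpha\|=0$ and $\lim_\alpha\|a_\beta a_\alpha-a_\alpha\|=0$. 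In both cases $X_1\cup X_2\subseteq\{a_\gamma\colon\gamma\in\Lambda\}$, which is the given orthogonal $\ell^1(\eta)$-set, so condition (2) of Theorem \ref{TTr} holds with $K=M=1$; the only subtlety is arranging genuine disjointness of $X_1$ and $X_2$ as \emph{subsets} (not just as indexed families), which one can do by passing to disjoint cofinal halves of $\Lambda$ via Lemma \ref{vanD} before building the two triangles, so that the $a_\gamma$'s used in $X_1$ are disjoint from those used in $X_2$ while each half still has true cardinality $\eta$.

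With $A$, $B$, $X_1$, $X_2$ in hand and all three hypotheses of Theorem \ref{TTr} verified, that theorem delivers a linear isomorphism $\mathcal E\colon\ell^\infty(\eta)\to\mA^\ast/\wap(\mA)$ of distortion at most $M/K=1$, i.e.\ an isometry after normalization; since $\ell^\infty(\eta)$ is infinite-dimensional this forces $\mA^\ast\neq\wap(\mA)$, proving (2). For (3), if moreover $d(\mA)\le\eta$ then Lemma \ref{hul} embeds $\mA^\ast$ isometrically into $\ell^\infty(d(\mA))\subseteq\ell^\infty(\eta)$ (padding with zeros if $d(\mA)<\eta$), and composing with $\mathcal E$ yields an isometric copy of $\mathcal A^\ast$ inside $\mathcal A^\ast/\wap(\mA)$; that is exactly isometric extreme non-Arens regularity, so (3) follows, just as in Corollary \ref{enAr:gen}.

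The main obstacle I anticipate is purely bookkeeping rather than conceptual: one must choose the vertically/horizontally cofinal index sets $\mathcal U_1,\mathcal U_2$ and the cofinal partition of $\Lambda$ so that (a) $X_1$ really does approximate $T^{u}_{AB}$ and $X_2$ really approximates $T^{l}_{AB}$ in the precise sense of Definition \ref{def:last}, (b) $X_1\cap X_2=\varnothing$, and (c) the true cardinality $\eta$ is preserved after all the restrictions. Getting the limit to fall on the correct factor ($a_\alpha$ versus $a_\beta$) in each triangle, so that the approximating families are injective in the required direction, is the one place where the weak-bai and weak-TI cases must be treated with slightly different choices; everything else is a direct appeal to Theorem \ref{TTr}.
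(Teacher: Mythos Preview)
Your plan coincides with the paper's own proof: split $\Lambda$ into two disjoint cofinal pieces, take $A$ and $B$ from the net restricted to these halves, set $x_{\alpha\beta}$ equal to whichever factor survives the weak bai/TI limit, and invoke Theorem~\ref{TTr} with $K=M=1$ (then Lemma~\ref{hul} for part~(iii)).

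One point to flag, which the paper's own write-up shares: in the weak bai case your choice $x_{\alpha\beta}=a_\alpha$ on the upper triangle determines $\alpha$, not $\beta$, so $X_1$ is \emph{horizontally} injective rather than vertically injective as hypothesis~(iii) of Theorem~\ref{TTr} literally requires, and dually $x_{\alpha\beta}=a_\beta$ on the lower triangle makes $X_2$ vertically injective rather than horizontally. This is not fatal---the proof of Theorem~\ref{TTr} is symmetric and goes through verbatim with the injectivity directions swapped (one partitions $R_1$ by the $\alpha$-coordinate and $R_2$ by the $\beta$-coordinate in the definition of $P_{\mathbf v}$)---but the discrepancy should be acknowledged rather than asserted away. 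In the weak TI case the choices $x_{\alpha\beta}=a_\beta$ (upper) and $x_{\alpha\beta}=a_\alpha$ (lower) do match the stated hypotheses of Theorem~\ref{TTr}, so no adjustment is needed there.
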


			\begin{proof}
Take $\Lambda_1,\: \Lambda_2\subset \Lambda$ with $\Lambda_1\cap \Lambda_2=\emptyset$  in such a way that  both  $\Lambda_1$ and $\Lambda_2$ are cofinal for  $\preceq$.  Put then
              \[A=\{a_{\alpha} \colon \alpha \in \Lambda_1\}\quad\text{ and}\quad B=\{a_{\alpha}\colon \alpha \in \Lambda_2\}. \]

If $\{a_\alpha\}$ is a weak bai, define the elements  $x_{\alpha\beta}$ by
				
										\[x_{\alpha\beta}=\begin{cases} a_\alpha,  \quad\text{ if}\quad  (\alpha,\beta)\in \Lambda\times \Lambda_1,\;\beta\succ \alpha\\ a_\beta,  \quad\text{ if}\quad (\alpha,\beta)\in \Lambda_2\times \Lambda,\; \alpha\succ \beta.\end{cases}\]

If $\{a_\alpha\}$ is a weak TI-net, define the elements  $x_{\alpha\beta}$ by
				
										\[x_{\alpha\beta}=\begin{cases} a_\beta,  \quad\text{ if}\quad  (\alpha,\beta)\in \Lambda\times \Lambda_1,\;\beta\succ\alpha\\ a_\alpha,  \quad\text{ if}\quad (\alpha,\beta)\in \Lambda_2\times \Lambda,\; \alpha\succ\beta.\end{cases}\]
					
						In each case, let

													\[X_1=\{x_{\alpha\beta}\colon (\alpha,\beta)\in \Lambda\times\Lambda_1,\;\beta\succ \alpha\} \;\text{ and}\; X_2=\{x_{\alpha\beta}\colon  (\alpha,\beta)\in \Lambda_2\times\Lambda,\;\alpha\succ \beta\}.\]
So here $X_1\subseteq A$ and $X_2\subseteq B$ are double-indexed by  \[\mathcal U_1=\{(\alpha,\beta)\in\Lambda\times\Lambda_1: \beta\succ\alpha\}\quad\text{
and}\quad\mathcal U_2=\{(\alpha,\beta)\in\Lambda_2\times\Lambda: \alpha\succ\beta\},\] respectively.
The sets $\mathcal U_1$
and $\mathcal U_2$ are clearly vertically cofinal and horizontally cofinal, respectively.

In the first situation, for every $\alpha\in \Lambda,$ the approximate identity property yields
 \[
\lim_{\overset{\beta\in \Lambda_1}{\beta\succ\alpha}}\norm{x_{\alpha\beta}-a_\alpha a_\beta}=
\lim_{\overset{\beta\in \Lambda_1}{\beta\succ\alpha}}\|a_\alpha-a_\alpha a_\beta\| =\lim_\beta \|a_\alpha-a_\alpha a_\beta\|=0.\]
Similarly,  for each $\beta\in \Lambda$,
 \[
\lim_{\overset{\alpha\in \Lambda_2}{\alpha\succ\beta}}\norm{x_{\alpha\beta}-a_\alpha a_\beta}=
\lim_{\overset{\alpha\in \Lambda_2}{\alpha\succ\beta}}\|a_\beta-a_\alpha a_\beta\| =\lim_\alpha \|a_\beta-a_\alpha a_\beta\|=0\]

In the second situation, the weak TI-net property yields

 \begin{align*}&
\lim_{\overset{\beta\in \Lambda_1}{\beta\succ\alpha}}\norm{x_{\alpha\beta}-a_\alpha a_\beta}=
\lim_{\overset{\beta\in \Lambda_1}{\beta\succ\alpha}}\|a_\beta-a_\alpha a_\beta\| =\lim_\beta \|a_\beta-a_\alpha a_\beta\|=0\quad
\text{for every}\quad\alpha\in \Lambda,\\&
\lim_{\overset{\alpha\in \Lambda_2}{\alpha\succ\beta}}\norm{x_{\alpha\beta}-a_\alpha a_\beta}=
\lim_{\overset{\alpha\in \Lambda_2}{\alpha\succ\beta}}\|a_\alpha-a_\alpha a_\beta\| =\lim_\alpha \|a_\alpha-a_\alpha a_\beta\|=0\quad\text{for every}\quad \beta\in \Lambda.\end{align*}

 Hence,  in each case,  $X_1$  approximates segments of $T_{AB}^{u}$ and $X_2$  approximates segments of $T^{l}_{AB}$.

 It is clear that $X_1$ is vertically injective and $X_2$ is horizontally injective.
Since by assumption, the norm of each $x_{\alpha\beta}$ is one and $X_1\cup X_2$ is an orthogonal $\ell^1(\eta)$-set,  Theorem \ref{TTr}  provides a
linear   isometry  of $\ell^\infty(\eta)$ into $\mA^*/\wap(\mA)$. In particular, $\mA$ is non-Arens regular.

  This isometry together with the condition $\eta\geq d(\mA)$  implies,  by Lemma \ref{hul}, that there is an isometry of $\mA^*$ into $\mA^*/\wap(\mA)$.  $\mA$ is therefore  isometrically enAr.
\end{proof}

\section{Extreme non-Arens regularity of the weighted convolution algebras}
We apply in this section Theorem \ref{TTr} to the weighted semigroup algebra of an infinite discrete weakly cancellative semigroup, the weighted group algebra  and the weighted measure algebra of an
infinite locally compact group. In each case, the algebra is r-enAr, where $r$ is at most equal to the diagonal bound of the weight. When $w=1$, these algebras are all isometrically enAr.
When $G$ is non-discrete, the weighted group algebra is isometrically enAr for any weight.

\begin{sdefinitions}\label{sdefs}
We first introduce  some terminology concerning weighted convolution algebras, for a more detailed discussion we refer the reader to \cite{dales-lau}.

Let $S$ be a semigroup with a topology.
 \begin{enumerate}
\item
A \emph{weight} on  $S$ is a continuous function
$w: S\to (0,\infty)$ which is submultiplicative, that is, with
\[
w(st)\le w(s)w(t)\quad\text{for every}\quad s,t\in S.
\]
If $S$ is a group with identity $e$, we shall assume in addition that $w(e)=1.$
\medskip

\item Following  \cite{baker-rejali} and \cite{craw-young}, we let $\Omega$ be the continuous function on $S\times S$
given by \[\Omega(s,t)=\frac{w(st)}{w(s)w(t)}.\] Note that $0<\Omega(s,t)\le 1$ for every $s,$
$t\in S.$
\item  The weight function is called
\textit{diagonally bounded} on $S$ if there exists $c>0$ such that
\[
w(s)w(t)\leq c\, w(st)\quad\text{whenever}\quad s,t\in S.\]
 In other words, the weight  function is diagonally bounded if $\Omega(s,t)\ge \frac1c$ for every $s$, $t\in S.$
\medskip

When $S$ is a group, it is usual to define the weight $w$ as diagonally bounded by $c>0$ when
\[
\sup_{s\in S} w(s)w(s^{-1})\le c.\]
It is easy to check that the two definitions are the same in this case.
\item  Let $G$ be a locally compact group.
 For a function space $\mathscr F(G)$ contained in $L^\infty(G)$, the corresponding weighted space is defined, following \cite{gronbaek} and \cite{dales-lau} as
 \[\mathscr F(G, w^{-1}) = \{f: S\to \C\colon  w^{-1} \, f \in \mathscr F(G),\}
\]
with the norm given by  $\|f\|_w=\|w^{-1} f\|_\infty$, for any $f\in \mathscr F(G, w^{-1})$.

If $\mathscr F(G)$ is contained in $L^1(G)$, then the weighted space for $w$ is defined as \[\mathscr F(G, w ) = \{f: S\to \C\colon  w  \, f \in \mathscr F(G)\}
\]
and the norm given by  $\|f\|_w=\|w f\|_1$, for any $f\in \mathscr F(G, w)$.

The space  $L^\infty(G,w^{-1})$ can then be identified with the Banach dual space of $L^1(G,w)$  via the pairing \[<f,\phi>=\int_G f(x)\overline{\phi(x)} dx\]
 for each $f\in L^1(G,w)\;\text{and}\; \phi\in L^\infty(G,w^{-1}).$
When $w\ge 1$, $L^1(G, w)$ is called \emph{Beurling algebra} and is studied for instance in \cite{dales-lau}.

In the same vein $M(G,w)$
will denote the space of all complex-valued measures  regular Borel measures on $G$ such that
\[\norm{\mu}_w=\int_G w(s) d|\mu|(s)\]
is finite.  $(M(G,w),\norm{\mu}_w)$ can be identified with the Banach dual space of $C_0(G,w^{-1})$  as defined above.
\item  When $S$ is a discrete semigroup, we shall consider the semigroup algebra $\ell^1(S)$,
its Banach dual space $\ell^\infty(S)$ of all bounded functions on $S$, and their corresponding weighted spaces $\ell^1(S, w)$ and $\ell^\infty(S, w^{-1})$ which are defined exactly as done above in the group case.
\end{enumerate}
\end{sdefinitions}
  \begin{remark}
	
Let $\WAP$ be the space of weakly almost periodic functions on a locally compact group (or on a discrete semigroup) $G$. The  literature contains a number of different definitions  for the weighted space $\WAPw$ of $\WAP$.
 The latest is in   \cite{dales-lau}, where the space $\WAPw$ is defined as a subspace of $L^\infty(G,w^{-1})$, as  in  \ref{sdefs} (iv) above.
With this definition,  using  that $L^\infty(G)/\WAP$ contains an isometric copy of $L^\infty(G)$ (\cite[Theorem B]{FG} for the non-discrete case and \cite[Theorem 4.3]{BF} or \cite[Theorem 3.3]{FV}, for the discrete one),
it is very quick to show that the quotient
$L^\infty(G, w^{-1})/\WAPw$ contains an isometric copy of $L^\infty(G, w^{-1})$ for any infinite locally compact group $G.$
However, this does not show that $L^\infty(G, w)$ is enAr, for  $\wap(G,w^{-1})$ can be different from $\W(L^1(G,w))$. They  can actually be very different. If $w$ is the weight on $\Z$ given by $w(n)=(1+n)^\alpha$, with $\alpha>0,$ it is shown in \cite[Example
9.1]{dales-lau}   that $\ell^1(\Z,w)$ is Arens regular, i.e., $\mathscr{WAP}(\ell^1(\Z,w))=\ell^\infty(\Z,w^{-1}).$  Since, as mentioned above, the quotient $\ell^\infty(\Z,w^{-1})/\wap(\Z,w^{-1})$ is at least as large as $\ell^\infty(\Z,w^{-1})$, we conclude that $\wap(\Z,w^{-1})$ is  much smaller that  $\mathscr{WAP}(\ell^1(\Z,w))$ in this case.

A more suitable definition for the weighted almost periodic functions in the context of Arens regularity was given by Baker and Rejali in \cite{baker-rejali}.  We will not need this definition of the space of functions $\WAPw$ in this paper (Theorem \ref{TTr} 
deals directly with the space
of functionals $\W(L^1(G, w))$)
and we will not go any further with this matter at the moment, but we hope to return to it in forthcoming work.
\end{remark}

\subsection{Weighted semigroup algebras}

We start with the weighted semigroup algebra of an infinite, discrete, weakly cancellative semigroup.
We see here that, whenever the weight is diagonally bounded by some $c>0$, Theorem \ref{TTr} applies and shows that $\ell^1(S,w)$ is $r$-enAr, where $r\le c.$
 For $w=1,$ $\ell^1(S,w)$ is therefore isometrically enAr. This latter fact was proved also in  \cite{FV}, in \cite[Theorem 4.5]{BF} and \cite[Theorem 6.4]{FG}.

Recall first that a semigroup $S$ is called \textit{weakly cancellative} if the sets
\[
s^{-1}t=\{u\in S:su=t\}\quad\text{ and }\quad ts^{-1}=\{u\in S:us=t\}
\]
are finite for every $s,t\in S$. We shall use the notations
\[
s^{-1}B=\{t\in S:st\in B\}\quad\text{ and}\quad A^{-1}B =
\bigcup_{s\in A}  s^{-1}B,
\]
where $s\in S$ and $A,B\subseteq S$. The sets $Bs^{-1}$ and $BA^{-1}$
are defined similarly.

\begin{theorem}\label{thm:quotient-non-separable}
  Let $S$ be a weakly cancellative, infinite, discrete semigroup, and let $w$ be a weight on $S$ that is diagonally bounded with bound $c$.	
Then the weighted semigroup algebra $\ell^1(S,w)$ is $r$-enAr, where $r\le c.$

In particular, $\ell^1(S,w)$ is isometrically enAr when $w=1.$
\end{theorem}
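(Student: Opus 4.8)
The plan is to apply Theorem~\ref{TTr} (and then Corollary~\ref{enAr:gen}) with $\mV=\ell^\infty(S)$, $\mV_\ast=\ell^1(S)$ and $\eta=|S|$. The map $f\mapsto wf$ is a linear isometry of $\ell^1(S,w)$ onto $\ell^1(S)$, so $\ell^1(S,w)$ may be regarded as (isometrically) the predual of the abelian von Neumann algebra $\ell^\infty(S)$ itself; under the induced duality the point mass $\delta_s$ is a positive normal functional of norm $w(s)$ whose support projection is the minimal projection $\mathbbm 1_{\{s\}}\in\ell^\infty(S)$. Since $S$ is infinite and discrete, $d(\ell^1(S,w))=|S|=\eta$, and with $\Lambda=\eta$ we have $\mathrm{tr}|\Lambda|=\eta$. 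Thus it suffices to exhibit bounded sets $A,B$ indexed by $\eta$ and disjoint sets $X_1,X_2$ inside $\ell^1(S,w)$ satisfying conditions (1)--(3) of Theorem~\ref{TTr}; Corollary~\ref{enAr:gen} then completes the proof.

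The core of the argument is combinatorial, and this is where weak cancellativity enters. I would construct by transfinite recursion two injective $\eta$-sequences $(s_\xi)_{\xi<\eta}$ and $(t_\xi)_{\xi<\eta}$ in $S$ with $\{s_\xi:\xi<\eta\}\cap\{t_\xi:\xi<\eta\}=\emptyset$ such that $(\alpha,\beta)\mapsto s_\alpha t_\beta$ is injective on $\{(\alpha,\beta)\in\eta\times\eta:\alpha\neq\beta\}$. At stage $\xi$ one chooses first $s_\xi$, then $t_\xi$, avoiding in each case a set of forbidden values: every equation that would produce a collision, e.g.\ $s_\xi t_\nu=s_\mu t_\rho$, or $s_\nu t_\xi=s_\mu t_\rho$, or $s_\xi t_\nu=s_\mu t_\xi$ with $\mu,\nu,\rho<\xi$, confines the element being chosen to a set of the form $u^{-1}v$ or $vu^{-1}$ with $u,v$ already chosen, which is finite by weak cancellativity; since there are fewer than $\eta$ such equations, the union of forbidden values has cardinality $<\eta$ and an admissible choice survives (when $\eta=\aleph_0$ the forbidden set at each finite stage is simply finite). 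Using two disjoint pools is essential: when $S$ is abelian one must keep $s_\alpha t_\beta\neq s_\beta t_\alpha$, which a single family cannot achieve. Now put $a_\alpha=\delta_{s_\alpha}/w(s_\alpha)$ and $b_\beta=\delta_{t_\beta}/w(t_\beta)$, so that $A=\{a_\alpha\}$ and $B=\{b_\beta\}$ lie in the unit sphere; since $a_\alpha b_\beta=\delta_{s_\alpha}*\delta_{t_\beta}/(w(s_\alpha)w(t_\beta))=\Omega(s_\alpha,t_\beta)\,\delta_{s_\alpha t_\beta}/w(s_\alpha t_\beta)$, every element of the upper triangle $T^{u}_{AB}$ and the lower triangle $T^{l}_{AB}$ equals $\Omega(s_\alpha,t_\beta)$ times a point mass of norm one, where $\Omega(s_\alpha,t_\beta)\in[1/c,1]$ by diagonal boundedness. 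Take $X_1=T^{u}_{AB}$ and $X_2=T^{l}_{AB}$, each double-indexed by the obvious cofinal set of pairs as in Definition~\ref{def:last}, with $x_{\alpha\beta}=a_\alpha b_\beta$.

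With this choice the verification of (1)--(3) is immediate. Approximation of segments is trivial since $x_{\alpha\beta}=a_\alpha b_\beta$ exactly. The support projection of $a_\alpha b_\beta$ is $\mathbbm 1_{\{s_\alpha t_\beta\}}$, so by injectivity of $(\alpha,\beta)\mapsto s_\alpha t_\beta$ distinct elements of $X_1\cup X_2$ have orthogonal supports, $|X_1\cup X_2|=\eta$, and $\|a_\alpha b_\beta\|_w=\Omega(s_\alpha,t_\beta)$; hence $X_1\cup X_2$ is an orthogonal $\ell^1(\eta)$-set with constant $K=1/c$ and bound $M=1$ (with $K=M=1$ when $w\equiv1$, since then $\Omega\equiv1$). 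The same injectivity makes $X_1$ and $X_2$ disjoint, $X_1$ vertically injective and $X_2$ horizontally injective. Theorem~\ref{TTr} then provides a linear isomorphism $\ell^\infty(\eta)\to\ell^1(S,w)^\ast/\W(\ell^1(S,w))$ of distortion at most $M/K\le c$, and since $d(\ell^1(S,w))=\eta$, Corollary~\ref{enAr:gen} shows that $\ell^1(S,w)$ is $r$-enAr with $r\le c$; when $w=1$ one has $K=M$, so $\ell^1(S,w)$ is isometrically enAr. The main obstacle is precisely the recursive construction of the two independent families — ensuring, including in the abelian case, that all cross products $s_\alpha t_\beta$ with $\alpha\neq\beta$ are pairwise distinct; the remainder is a routine transcription through the isometry $\ell^1(S,w)\cong(\ell^\infty(S))_\ast$.
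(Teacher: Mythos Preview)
Your recursive construction has a genuine gap. You assert that every collision equation confines the element being chosen to a set of the form $u^{-1}v$ or $vu^{-1}$, hence finite by weak cancellativity. But the equations you list (``e.g.\ $s_\xi t_\nu=s_\mu t_\rho$, or $s_\nu t_\xi=s_\mu t_\rho$, or $s_\xi t_\nu=s_\mu t_\xi$'') omit the constraints $s_\mu t_\xi=s_{\mu'}t_\xi$ (with $\mu\neq\mu'<\xi$, when choosing $t_\xi$) and the symmetric $s_\xi t_\nu=s_\xi t_{\nu'}$. The first forces $t_\xi$ to avoid the set $\{t:s_\mu t=s_{\mu'}t\}$, which is \emph{not} of the form $u^{-1}v$; weak cancellativity bounds the fibres of one-sided translations, not these ``merging sets''. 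In the weakly cancellative semigroup $(\N,\max)$ one has $a\cdot t=b\cdot t$ for every $t\ge\max(a,b)$, so that set is cofinite. Worse, in $(\N,\max)$ the injectivity you want is unattainable for any choice: given injective $(s_\alpha)$, $(t_\beta)$, pick $\alpha_0\ge2$ with $s_{\alpha_0}>t_0,t_1$; then $s_{\alpha_0}t_0=s_{\alpha_0}=s_{\alpha_0}t_1$ while $(\alpha_0,0)\neq(\alpha_0,1)$ are both off-diagonal. Thus the step ``an admissible choice survives'' fails already in the countable case.

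The paper avoids this by not demanding pointwise injectivity of the product map. It fixes an increasing exhaustion $(S_\alpha)_{\alpha<\eta}$ with $|S_\alpha|<\eta$ and chooses a \emph{single} sequence $(s_\alpha)$ so that the translated sets $S_\beta s_\beta$ (and $s_\alpha S_\alpha$) are pairwise disjoint; this set-level disjointness \emph{does} follow from weak cancellativity alone, since the obstruction sets are unions of finitely many sets $u^{-1}v$, $vu^{-1}$. One then restricts to horizontally and vertically cofinal index sets on which $s_\alpha s_\beta\in S_\beta s_\beta$ (resp.\ $s_\alpha s_\beta\in s_\alpha S_\alpha$), obtaining precisely the vertical and horizontal injectivity that Theorem~\ref{TTr} requires, without ever needing two different products $s_\alpha s_\beta$, $s_{\alpha'} s_\beta$ in the same column to be distinct. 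Your two-pool device is replaced by splitting the index set $\eta$ itself into two disjoint cofinal pieces $\Lambda_1,\Lambda_2$.
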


\begin{proof}
Let the weight $w$ on $S$ be diagonally bounded by $c>0$ so that
\[\frac1c\le\Omega(s,t)=\frac{w(st)}{w(s)w(t)}\le1\] for every $s$, $t\in S.$

Let $\eta=|S|,$
and let $\left\{S_\alpha\right\}_{\alpha<\eta}$ be an increasing cover of $S$ made of subsets with
$|S_\alpha|\le \alpha$ for every $\alpha<\eta,$ and collect  by induction  a faithfully indexed set $X=\{s_\alpha:\alpha<\eta\}$ such that

\begin{equation}\left(S_\alpha s_\alpha\right)\cap \left(S_\beta s_\beta\right)=\left(s_\alpha S_\alpha\right)
\cap \left(s_\beta S_\beta\right)=\emptyset \quad \mbox{ for every } \alpha<\beta<\eta. \label{thin}\end{equation}


This is possible since $S$
is weakly cancellative, and so, for every $\alpha,\beta<\eta$,
 \[|(S_\beta^{-1} S_\alpha s_\alpha)\cup (s_\alpha S_\alpha  S_\beta^{-1})|\le\max\{\alpha, \beta\}<\eta.\]

Note that for each $\alpha<\eta,$ there exists $\beta(\alpha)<\eta$, $\beta(\alpha)>\alpha$ such  that
\begin{equation}s_\alpha s_\beta\in S_\beta s_\beta\quad\text{ for every}\quad \beta\ge\beta(\alpha).\label{thin2}\end{equation}
Similarly, for each $\beta<\eta$, there exists $\alpha(\beta)<\eta$,
$\alpha(\beta)>\beta$ such that \begin{equation}s_\alpha s_\beta\in s_\alpha S_\alpha\quad\text{for every}\quad
\alpha\ge\alpha(\beta).\label{thin3}\end{equation}

Split $\Lambda$ into two cofinal subsets $\Lambda_1$ and $\Lambda_2$, let \begin{align*}
  &A = \left\{\frac{\delta_{s_\alpha}}{w(s_\alpha)}:\alpha\in \Lambda_1\right\} \quad \mbox{ and }\quad
  B = \left\{\frac{\delta_{s_\alpha}}{w(s_\alpha)}:\alpha\in \Lambda_2\right\}
\end{align*}
and note that \[\frac{\delta_{s_\alpha}}{w(s_\alpha)}\ast\frac{\delta_{s_\beta}}{w(s_\beta)}=\frac{\delta_{s_\alpha s_\beta}}{w(s_\alpha)w(s_\beta)}.\]

Now, for each $\alpha<\eta$,   define

\[x_{\alpha\beta}= \frac{\delta_{s_\alpha s_\beta}}{w(s_\alpha)w(s_\beta)},\;\text{if }\;  \beta\in \Lambda_1,\;\beta\ge \alpha(\beta),\]
and for each $\beta<\eta$, define \[x_{\alpha\beta}= \frac{\delta_{s_\alpha s_\beta}}{w(s_\alpha)w(s_\beta)},\;\text{if }\;  \alpha\in \Lambda_2,\;
\alpha\ge \beta(\alpha).\]
Then put \begin{align*}X_1&=\{x_{\alpha\beta}\colon \alpha<\eta,\, \beta\in \Lambda_1, \beta\ge \alpha(\beta) \}\quad \mbox{ and  }\\X_2&=\{x_{\alpha\beta}\colon \alpha\in \Lambda_1, \, \beta\in \Lambda_2, \alpha \ge
\beta(\alpha) \},\end{align*}
that is, $X_1$ and $X_2$ are double-indexed by the vertically and horizontally cofinal sets \[\{(\alpha,\beta)\in\Lambda\times \Lambda_1: \beta\ge \alpha(\beta)\}\quad\text{and}\quad \{(\alpha,\beta)\in\Lambda_2\times \Lambda: \alpha\ge \beta(\alpha)\},\] respectively.
Using properties (\ref{thin}), (\ref{thin2}) and (\ref{thin3}), we see that $X_1$ and $X_2$ are disjoint and are, respectively, vertically and horizontally injective (and hence have  cardinality $\eta$).
 Since  $S(\delta_s/w(s))=w(s)\mathbbm{1}_{\{s\}}\in \ell^\infty(S,w^{-1})$ for each $s\in S$, and $\frac1c\leq\norm{x_{\alpha\beta}}_w\leq 1$ for every $x_{\alpha\beta}\in X_1\cup X_2$, we see that $X_1\cup X_2$ is an orthogonal $\ell^1(\eta)$-set with bound $1$ and constant $K=1/c$ in the sense of Definition \ref{def:appl1}. Since   $X_1$   approximates  segments in $T_{AB}^{u}$ and $X_2$ approximates segments in  $T_{AB}^l $, we see that  all conditions of Theorem \ref{TTr} are met so that the desired result that $\ell^1(S, w)$ is $r$-enAr with $r\le c.$

When $w=1$, $\ell^1(S, w)$ is clearly isometrically enAr.
\end{proof}


\subsection{Weighted group algebras}

We start with the weighted analogue of \cite[Theorem 2]{ulger}. With the same proof, this result  is valid for any weight function $w$ on $G.$

\begin{lemma}\label{thm:weighted_wap_l^1}  Let $G$ be a locally compact group. Then for any weight $w$ on $G$, we have $\W(L^1(G,w))\subseteq \UCw$.
\end{lemma}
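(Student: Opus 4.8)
The plan is to imitate Ülger's argument for the unweighted Fourier algebra (\cite[Theorem~2]{ulger}), adapting it to the weighted convolution algebra. The statement to prove is that $\W(L^1(G,w))\subseteq\UCw$, where $\UCw$ consists of those $\phi\in L^\infty(G,w^{-1})$ for which the map $G\to L^\infty(G,w^{-1})$, $x\mapsto x\cdot\phi$ (or equivalently, in the weighted setting, $x\mapsto L_x\phi$ after the natural normalization by the weight) is norm-continuous. The key observation is that weak almost periodicity of $\phi\in L^1(G,w)^\ast$ forces the orbit map to carry weak compactness, and combined with the approximate identity of $L^1(G,w)$ this upgrades to norm continuity.

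First I would recall that $L^1(G,w)$ has a bounded approximate identity, for instance $(e_U)_U$ where $e_U=w(\cdot)^{-1}\mathbbm{1}_U/|U|$ ranges over a neighbourhood basis $U$ of the identity $e$ (using $w(e)=1$ and continuity of $w$ to control the norm); note $\|e_U\|_w\to 1$. For $\phi\in\W(L^1(G,w))$ the left orbit $\{a\cdot\phi:a\in L^1(G,w)_1\}$ is relatively weakly compact in $L^\infty(G,w^{-1})$. The crucial step is then to show that for such $\phi$ one has $e_U\cdot\phi\to\phi$ in norm: by weak compactness of the orbit, any subnet of $(e_U\cdot\phi)_U$ has a weakly convergent subnet, and since it converges to $\phi$ in the weak$^\ast$ topology (because $(e_U)$ is an approximate identity and pairing against $L^1(G,w)$), the weak limit must be $\phi$; then a Mazur-type / convexity argument, or more directly the fact that on a relatively weakly compact set weak$^\ast$ and weak topologies induced from the predual agree in a way that forces norm convergence of the approximate-identity net, gives $\|e_U\cdot\phi-\phi\|_w\to 0$. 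This identifies $\W(L^1(G,w))$ with a subset of the essential part $L^1(G,w)\cdot L^\infty(G,w^{-1})$.

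Next I would invoke (the weighted analogue of) the Cohen–Hewitt factorization theorem together with the standard identification: $L^1(G,w)\cdot L^\infty(G,w^{-1})=\UCw$, i.e. the essential part of the module $L^\infty(G,w^{-1})$ under $L^1(G,w)$ coincides with the space of functions whose (weighted) left-translation orbit map is norm continuous. Concretely, if $\phi=a\cdot\psi$ with $a\in L^1(G,w)$, then $x\cdot\phi=(x\cdot a)\cdot\psi$ and norm continuity of $x\mapsto x\cdot a$ in $L^1(G,w)$ (which holds for every weight, since translation is strongly continuous on $L^1(G,w)$ — this is where submultiplicativity and local boundedness of $w$ enter) transfers to norm continuity of $x\mapsto x\cdot\phi$, so $\phi\in\UCw$. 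Chaining the two inclusions $\W(L^1(G,w))\subseteq L^1(G,w)\cdot L^\infty(G,w^{-1})=\UCw$ completes the proof.

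The main obstacle I anticipate is the first step: establishing that weak almost periodicity of $\phi$ actually forces $\|e_U\cdot\phi-\phi\|_w\to0$ rather than mere weak$^\ast$ convergence. This is precisely the heart of Ülger's argument and requires care — one uses that a relatively weakly compact set in a dual Banach space is in particular bounded and that on it the weak topology and the weak$^\ast$ topology coincide, so the weak$^\ast$-convergent net $(e_U\cdot\phi)$ is also weakly convergent, hence (being a net, by passing to convex combinations if necessary, or by the fact that the module action $a\mapsto a\cdot\phi$ is weak-to-weak continuous and maps the bai to a net weakly converging to $\phi$) one extracts norm convergence from weak convergence of a net that is ``almost'' a net in the essential submodule. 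The weighted modifications are otherwise routine: everything transfers verbatim once one checks that $w$ being a weight (submultiplicative, continuous, $w(e)=1$) guarantees (i) $L^1(G,w)$ is a Banach algebra with bounded approximate identity, (ii) left translation is strongly continuous on $L^1(G,w)$, and (iii) the module pairing between $L^1(G,w)$ and $L^\infty(G,w^{-1})$ behaves as in the unweighted case — hence the remark in the text that ``with the same proof'' the result holds for any weight.
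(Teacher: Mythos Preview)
Your approach matches the paper's intent: it simply cites {\"U}lger's argument for $L^1(G)$ (not the Fourier algebra --- that is a slip in your first sentence) and asserts the same proof works verbatim in the weighted setting, which is precisely what you outline.

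One step needs tightening. Relative weak compactness of the orbit does \emph{not} yield $\|e_U\cdot\phi-\phi\|_w\to 0$ as you claim; agreement of the weak and weak$^\ast$ topologies on a relatively weakly compact set only gives $e_U\cdot\phi\to\phi$ weakly. That, however, is all you need: $\phi$ then lies in the weak --- hence, by Mazur, norm --- closure of $L^1(G,w)_1\cdot\phi$, and therefore in the norm closure of $L^1(G,w)\cdot L^\infty(G,w^{-1})$, which by Cohen--Hewitt factorization already equals $L^1(G,w)\cdot L^\infty(G,w^{-1})=\RUCw$. The symmetric argument with the other orbit (weak almost periodicity is two-sided) gives $\phi\in\LUCw$, and the intersection is $\UCw$. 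Norm convergence of the bai net then follows \emph{a posteriori} once $\phi$ is known to factor, but you should not invoke it to reach that conclusion.
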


\begin{theorem} \label{Eureka} Let $G$ be an infinite, non-discrete, locally compact group and  $\mathcal F(w)$ be any closed subspace of
  $\CBw.$ Then there exists a linear isometric copy of $L^\infty(G,w^{-1})$ in the quotient space ${L^\infty(G,w^{-1})}/{\mathcal F}(w).$
	\end{theorem}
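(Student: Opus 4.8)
The plan is to strip off the weight, reduce to the extremal choice $\mathcal F(w)=\mathscr{CB}(G,w^{-1})$, and then build an explicit isometric embedding by exploiting the non-discreteness of $G$ inside each coset of an open $\sigma$-compact subgroup. First, multiplication by $w^{-1}$ is a surjective linear isometry $T\colon L^\infty(G,w^{-1})\to L^\infty(G)$ (since $\|f\|_w=\|w^{-1}f\|_\infty$) which, by the very definition of the weighted spaces, maps $\mathscr{CB}(G,w^{-1})$ onto $\mathscr{CB}(G)$; hence it maps the closed subspace $\mathcal F(w)$ onto a closed subspace $\mathcal F\subseteq\mathscr{CB}(G)$ and induces an isometry $L^\infty(G,w^{-1})/\mathcal F(w)\cong L^\infty(G)/\mathcal F$. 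So it suffices to embed $L^\infty(G)$ isometrically into $L^\infty(G)/\mathcal F$. Since $\mathcal F\subseteq\mathscr{CB}(G)$ and $0\in\mathcal F$, a sandwiching argument reduces this to the single case $\mathcal F=\mathscr{CB}(G)$: it is enough to produce a \emph{linear} map $\Phi\colon L^\infty(G)\to L^\infty(G)$ with $\|\Phi f\|_\infty=\|f\|_\infty$ and $\operatorname{dist}\bigl(\Phi f,\mathscr{CB}(G)\bigr)=\|f\|_\infty$ for every $f$, because then $\|f\|_\infty=\operatorname{dist}(\Phi f,\mathscr{CB}(G))\le\operatorname{dist}(\Phi f,\mathcal F)\le\|\Phi f-0\|_\infty=\|f\|_\infty$, and $f\mapsto\Phi f+\mathcal F$ is the sought embedding.

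To build $\Phi$ I would fix an open $\sigma$-compact subgroup $H$ of $G$ (necessarily non-discrete, since $G$ is) and decompose $G=\bigsqcup_{i\in I}H_i$ into its clopen left cosets $H_i=g_iH$, marking a point $e_i=g_i\in H_i$; thus $L^\infty(G)$ is the $\ell^\infty$-sum of the $L^\infty(H_i)$ and $\|h\|_\infty=\sup_i\|h|_{H_i}\|_\infty$. The key point is that inside each $H_i$ one can choose a measurable sign function $\sigma_i\colon H_i\to\{+1,-1\}$ and a non-singular measurable map $\rho_i\colon H_i\to H_i$ (so that $\mu\circ\rho_i^{-1}$ and $\mu$ are mutually absolutely continuous) that are \emph{locally mixing at $e_i$}: for every Borel set $B\subseteq H_i$ with $\mu(B)>0$ and every neighbourhood $U$ of $e_i$, both $\{x\in U:\rho_i(x)\in B,\ \sigma_i(x)=+1\}$ and $\{x\in U:\rho_i(x)\in B,\ \sigma_i(x)=-1\}$ have positive measure. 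Granting this, put $(\Phi f)(x)=\sigma_i(x)\,f(\rho_i(x))$ for $x\in H_i$. Then $\Phi$ is linear, and non-singularity of $\rho_i$ gives $\|(\Phi f)|_{H_i}\|_\infty=\|f|_{H_i}\|_\infty$ for each $i$, so $\|\Phi f\|_\infty=\sup_i\|f|_{H_i}\|_\infty=\|f\|_\infty$.

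For the distance estimate, fix $g\in\mathscr{CB}(G)$ and $\varepsilon>0$, choose $i$ with $\|f|_{H_i}\|_\infty>\|f\|_\infty-\varepsilon$ and then $v$ in the essential range of $f|_{H_i}$ with $|v|>\|f\|_\infty-\varepsilon$; put $c=g(e_i)$ and pick a neighbourhood $U$ of $e_i$ with $|g-c|<\varepsilon$ on $U$. Since $B:=\{y\in H_i:|f(y)-v|<\varepsilon\}$ has positive measure, local mixing produces a positive-measure subset of $U$ on which $\Phi f=f\circ\rho_i$ with $\rho_i\in B$ (so $|\Phi f-v|<\varepsilon$) and a positive-measure subset of $U$ on which $\Phi f=-f\circ\rho_i$ with $\rho_i\in B$ (so $|\Phi f+v|<\varepsilon$). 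Hence $\|\Phi f-g\|_\infty\ge\max(|v-c|,|v+c|)-2\varepsilon\ge|v|-2\varepsilon>\|f\|_\infty-3\varepsilon$, using $|v-c|+|v+c|\ge 2|v|$. Letting $\varepsilon\downarrow 0$ and varying $g$ gives $\operatorname{dist}(\Phi f,\mathscr{CB}(G))\ge\|f\|_\infty$; the reverse inequality is trivial ($g=0$).

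The genuinely delicate step is the local mixing construction. When $H_i$ is metrizable it is routine: take a decreasing neighbourhood basis $(V_n)$ at $e_i$, split each annulus $V_n\setminus V_{n+1}$ into two positive-measure pieces $W_n^{+},W_n^{-}$ (re-indexing so all of them have positive measure), let $\sigma_i=\pm1$ on $W_n^{\pm}$, and let $\rho_i$ carry each $W_n^{+}$ and each $W_n^{-}$ by a non-singular map \emph{onto all of} $H_i$ (possible because $H_i$ is $\sigma$-finite and non-atomic), with $\rho_i$ defined arbitrarily and non-singularly off $V_1$; since every neighbourhood of $e_i$ contains some $W_n^{\pm}$, local mixing follows. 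For non-metrizable $G$ the neighbourhoods of $e_i$ form an uncountable filter and one must mix against all of them simultaneously; this can be arranged by a measure-algebra argument, using that $H_i$ is non-discrete — so $\{e_i\}$ is the intersection of countably many neighbourhoods of measure zero and the Haar measure algebra of $H_i$ is homogeneous — in the spirit of the non-discrete construction in \cite{FG}. This measure-theoretic bookkeeping is where the real work lies; the rest is the soft scheme above.
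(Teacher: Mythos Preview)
Your approach is essentially the same as the paper's. Both proofs strip the weight via the isometry $f\mapsto w^{-1}f$ (which carries $\mathscr{CB}(G,w^{-1})$ onto $\mathscr{CB}(G)$), both reduce the problem to producing a linear isometry $\Phi\colon L^\infty(G)\to L^\infty(G)$ that is $1$-preserved by $\mathscr{CB}(G)$, and both pass to the quotient by an arbitrary $\mathcal F\subseteq\mathscr{CB}(G)$ via the sandwiching $\operatorname{dist}(\Phi f,\mathscr{CB}(G))\le\operatorname{dist}(\Phi f,\mathcal F)\le\|\Phi f\|$ --- which is exactly the content of the paper's Lemma~\ref{quotient}. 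The only difference is that the paper quotes \cite[Theorem~6.3]{FG} as a black box for the existence of such a $\Phi$, whereas you attempt to reconstruct it.

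Your explicit construction in the metrizable case (annuli $V_n\setminus V_{n+1}$, sign splitting, non-singular maps onto $H_i$) is correct and is in the spirit of the argument in \cite{FG}. For the non-metrizable case, however, your parenthetical justification is garbled: you write that ``$\{e_i\}$ is the intersection of countably many neighbourhoods of measure zero'', but open neighbourhoods always have positive Haar measure, and $\{e_i\}$ being a $G_\delta$ would force metrizability of $H_i$ --- precisely what you are assuming fails. Since you end up deferring to \cite{FG} for this case anyway, you land in the same place as the paper; but if you intend to give a self-contained argument there, that sentence needs to be replaced by something that actually works in the non-metrizable setting (e.g.\ the Kakutani--Kodaira reduction to quotients by compact normal subgroups, as used elsewhere in the paper and in \cite{FG}).
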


\begin{proof} By \cite[Theorem 6.3]{FG}, there is a linear isometry of
$L^\infty(G)$ into $L^\infty(G)$ which is $1$-preserved by $\CB(G)$. Since $L^\infty(G)$ and $\CB(G)$ are linearly isometric to their weighted analogues,
this gives a linear isometry of $L^\infty(G, w^{-1})$ into $L^\infty(G,w^{-1})$ which is $1$-preserved by $\CBw.$
Lemma \ref{quotient} provides then the desired linear isometry $L^\infty(G,w^{-1})$ into $L^\infty(G,w^{-1})/\mathcal F(w).$
\end{proof}

This leads immediately to the isometric enArity of $L^1(G,w)$ for any weight $w$  on $G$ when
$G$ is infinite and non-discrete.

\begin{corollary}
Let $G$ be an infinite, non-discrete, locally compact group and $w$ be any weight on $G.$ Then
 the weighted group algebra $L^1(G,w)$ is isometrically enAr for any weight function $w$ on $G.$
\end{corollary}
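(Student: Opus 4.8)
The plan is to obtain the conclusion as a direct specialization of Theorem \ref{Eureka}, taking the closed subspace $\mathcal F(w)$ appearing there to be the space $\W(L^1(G,w))$ of weakly almost periodic functionals on $L^1(G,w)$.

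First I would recall, from \ref{sdefs}(iv), that the Banach dual of $L^1(G,w)$ is $L^\infty(G,w^{-1})$, so that the quotient figuring in Definition \ref{new} for $\mA=L^1(G,w)$ is exactly $L^\infty(G,w^{-1})/\W(L^1(G,w))$. Next I would check that $\W(L^1(G,w))$ is an admissible choice of $\mathcal F(w)$ in Theorem \ref{Eureka}, i.e.\ that it is a closed linear subspace of $\CBw$: it is norm-closed in $L^\infty(G,w^{-1})$ by definition, while Lemma \ref{thm:weighted_wap_l^1} provides the inclusions $\W(L^1(G,w))\subseteq \UCw\subseteq \CBw$; since the $\CBw$-norm is the restriction of the $L^\infty(G,w^{-1})$-norm, norm-closedness passes to $\CBw$, and $\W(L^1(G,w))$ is indeed a closed subspace of $\CBw$.

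With these two points in hand, Theorem \ref{Eureka} (applicable because $G$ is infinite and non-discrete), applied to $\mathcal F(w)=\W(L^1(G,w))$, yields a linear isometric copy of $L^\infty(G,w^{-1})=L^1(G,w)^\ast$ inside $L^\infty(G,w^{-1})/\W(L^1(G,w))=L^1(G,w)^\ast/\W(L^1(G,w))$. By Definition \ref{new} with $r=1$ this is precisely the statement that $L^1(G,w)$ is isometrically enAr, which proves the corollary. Since the whole argument is merely an instantiation of Theorem \ref{Eureka}, there is no real obstacle; the only small point requiring attention is that $\W(L^1(G,w))$ sits inside $\CBw$ as a closed subspace, and this is exactly what Lemma \ref{thm:weighted_wap_l^1} delivers.
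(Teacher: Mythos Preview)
Your proposal is correct and follows exactly the same route as the paper's own proof: invoke Lemma \ref{thm:weighted_wap_l^1} to place $\W(L^1(G,w))$ inside $\CBw$, then apply Theorem \ref{Eureka} with $\mathcal F(w)=\W(L^1(G,w))$. Your write-up is simply more explicit about the identification $L^1(G,w)^\ast=L^\infty(G,w^{-1})$ and about why closedness passes from $L^\infty(G,w^{-1})$ to $\CBw$, but the argument is identical.
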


\begin{proof} Since Lemma \ref{thm:weighted_wap_l^1} shows that \[\W(L^1(G,w))\subseteq \UCw\subseteq \CBw,\] Theorem \ref{Eureka} provides the required isometry
\[L^\infty(G,w^{-1})\to L^\infty(G,w^{-1})/\wap(L^1(G,w)).\]
 \end{proof}

Here are our corollaries.

\begin{corollary} \label{enAr} Let $G$ be an infinite locally compact group.
\begin{enumerate}
\item If $G$ is not discrete, then the weighted group algebra $L^1(G,w)$ is isometrically enAr for any weight $w$ on $G.$
\item If $G$ is an infinite discrete group, then the  weighted group algebra $\ell^1(G,w)$ is $r$-enAr, where $r\le c,$ for any
weight $w$ on $G$ that is diagonally bounded with bound $c$.
\end{enumerate}
\end{corollary}

\begin{remark} \label{wAr} As mentioned earlier, with the weight given on $\Z$ by $w(n)=(1+n)^\alpha$, $\alpha>0$, the weighted group algebra $\ell^1(\Z,w)$ is even Arens regular.  So when $w$ is not diagonally bounded, Corollary \ref{enAr} fails badly. \end{remark}

\subsection{Weighted measure algebras} Let $G$ be an infinite locally compact group. Next theorem shows that  $M(G,w)$ is $r$-enAr with $r\le c$
when  the weight $w$ is diagonally bounded by $c>0$. In particular,  $M(G)$ is isometrically enAr.
As in Remark \ref{wAr}, the theorem fails when $w$ is not diagonally bounded.

\begin{theorem}\label{th:wmg} Let $G$ be an infinite locally compact group $G$ and $w$ be a diagonally bounded weight on $G$ with bound $c>0$. Then
the weighted measure algebra $M(G,w)$ is $r$-enAr with $r\le c.$ In particular, $M(G)$ is isometrically enAr.
\end{theorem}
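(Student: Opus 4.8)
The plan is to verify, for $\mA=M(G,w)$ with the cardinal $\eta:=|G|$, the hypotheses of Theorem \ref{TTr} and then to invoke Corollary \ref{enAr:gen}. First I would set up the ambient von Neumann algebra. As recorded in Some definitions \ref{sdefs}(iv), $M(G,w)$ is the Banach dual of $C_0(G,w^{-1})$, and transporting the pointwise product of $C_0(G)$ along the linear isometry $f\mapsto w^{-1}f$ makes $C_0(G,w^{-1})$ into a commutative C$^\ast$-algebra isometrically $\ast$-isomorphic to $C_0(G)$; hence $\mV:=C_0(G,w^{-1})^{\ast\ast}$ is a commutative von Neumann algebra with predual $\mV_\ast=M(G,w)$, so $\mA=M(G,w)$ is (all of) the predual $\mV_\ast$ and Theorem \ref{TTr} will then give distortion at most $M/K$. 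For $s\in G$, the point mass $\delta_s/w(s)$ is a norm-one positive normal functional on $\mV$ whose support projection is the atom $w(s)\mathbbm 1_{\{s\}}\in\mV$, exactly as in the discrete case, and these projections are orthogonal for distinct $s$. Finally, $M(G,w)$ is linearly isometric to $M(G)$ via $\mu\mapsto w^{-1}\mu$, so $d(M(G,w))=d(M(G))=|G|=\eta$.

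Next I would build the triangle data, which is a verbatim transcription of the proof of Theorem \ref{thm:quotient-non-separable} with $\ell^1(S,w)$ replaced by $M(G,w)$. Since $G$ is (left and right) cancellative, a transfinite recursion over an increasing cover $\{G_\alpha\}_{\alpha<\eta}$ of $G$ by sets of size $\le|\alpha|$ produces a faithfully indexed ``thin'' set $\{s_\alpha:\alpha<\eta\}\subseteq G$ satisfying the disjointness property \eqref{thin} and the eventual-capture properties \eqref{thin2} and \eqref{thin3}. Splitting $\Lambda:=\eta$ into cofinal pieces $\Lambda_1\sqcup\Lambda_2$, I set $A=\{\delta_{s_\alpha}/w(s_\alpha):\alpha\in\Lambda_1\}$, $B=\{\delta_{s_\alpha}/w(s_\alpha):\alpha\in\Lambda_2\}$, and $x_{\alpha\beta}=\delta_{s_\alpha s_\beta}/(w(s_\alpha)w(s_\beta))$, defining $X_1$ and $X_2$ double-indexed by the vertically cofinal set $\{(\alpha,\beta)\in\Lambda\times\Lambda_1:\beta\ge\alpha(\beta)\}$ and the horizontally cofinal set $\{(\alpha,\beta)\in\Lambda_2\times\Lambda:\alpha\ge\beta(\alpha)\}$ exactly as there. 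Because $x_{\alpha\beta}$ is literally the convolution $a_\alpha\ast b_\beta$, the sets $X_1$ and $X_2$ approximate segments in $T^{u}_{AB}$ and $T^{l}_{AB}$ (with constant-zero limits); property \eqref{thin} forces the group elements $s_\alpha s_\beta$ occurring in $X_1\cup X_2$ to be pairwise distinct, so $X_1$ is vertically injective, $X_2$ is horizontally injective, $X_1\cap X_2=\emptyset$, and $X_1\cup X_2$ is an orthogonal $\ell^1(\eta)$-set with $\|x_{\alpha\beta}\|_w=\Omega(s_\alpha,s_\beta)\in[1/c,1]$, i.e.\ with constant $K=1/c$ and bound $M=1$.

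With all hypotheses checked, Theorem \ref{TTr} yields a linear isomorphism $\ell^\infty(\eta)\to M(G,w)^\ast/\wap(M(G,w))$ of distortion at most $M/K=c$, and since $d(M(G,w))=\eta$, Corollary \ref{enAr:gen} gives that $M(G,w)$ is $r$-enAr with $r\le c$. When $w\equiv1$ one has $\Omega\equiv1$, so $K=M=1$ and $M(G)$ is isometrically enAr.

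The substantive points are the two facts packaged in the first paragraph: the identification of $M(G,w)$ as a von Neumann-algebra predual (routine, since $C_0(G,w^{-1})$ is a disguised copy of $C_0(G)$), and the density-character equality $d(M(G))=|G|$ for infinite locally compact $G$, which is the one external input worth stating explicitly. Everything else is exactly the recursion of Theorem \ref{thm:quotient-non-separable}; crucially, because only point masses enter the construction, the non-discreteness of $G$ plays no role and no new analytic difficulty appears. I expect that the only place requiring genuine care is this density-character identification: were one to try to bypass it by using measures spread over a dense subset of $G$ instead of point masses, the convolutions $a_\alpha\ast b_\beta$ would cease to be controllable.
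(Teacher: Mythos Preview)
Your proposal is correct and follows essentially the same approach as the paper: transplant the point-mass construction from Theorem~\ref{thm:quotient-non-separable} into $M(G,w)$, regard the support projections $w(s)\mathbbm{1}_{\{s\}}$ as elements of the von Neumann algebra $M(G,w)^\ast$, apply Theorem~\ref{TTr}, and finish with the density-character identity $d(M(G))=|G|$ (which the paper attributes to \cite[Theorem~5.5]{HN1}) together with Corollary~\ref{enAr:gen}. Your explicit identification of $\mV=C_0(G,w^{-1})^{\ast\ast}$ and the observation that $d(M(G,w))=d(M(G))$ via the isometry $\mu\mapsto w^{-1}\mu$ just spell out steps the paper leaves implicit.
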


\begin{proof} For $\eta=|G|$, let $A$, $B$, $X_1$ and
$X_2$ be as in the proof of Theorem \ref{thm:quotient-non-separable}.
Regarding these  as subsets of $M(G,w)$ and regarding the elements $S(\delta_s/w(s))=w(s)\mathbbm{1}_{\{s\}}$ as  projections  in $M(G,w)^*$, all  the conditions for Theorem \ref{TTr} to apply are satisfied.
Therefore, we have a linear isomorphism of
$\ell^\infty(\eta)$ in the quotient $M(G,w)^*/\wap(M(G,w))$   with distortion at most $c$.

Since by  \cite[Theorem 5.5]{HN1},
 $|G|$ is the density of $M(G)$, Corollary \ref{enAr} yields the theorem.
\end{proof}

\section{The Fourier algebra}
In this section we apply  Theorem \ref{TTr} to show  that
\begin{enumerate}
\item [(1)] for every locally compact group $G$, there is a linear  isometry  from $\ell^\infty(\chi(G))$  into $VN(G)/\wap(A(G))$, and
\item  [(2)] the existence in $G$ of an open, non-compact, amenable   subgroup implies that  there is a linear  isometry  from $\ell^\infty $  into $VN(G)/\wap(A(G))$.
\end{enumerate}
Since the density character of $A(G)$ is  $\max\{\kappa(G),\chi(G)\}$,  (1) shows  automatically that the Fourier algebra $A(G)$ is  isometrically enAr for those groups with  $\chi(G)\geq \kappa(G)$.
This implies  Hu's theorem \cite{Hu97}, mentioned in the introduction, to the effect that in this situation,
$A(G)$ is enAr in the sense of Granirer.
The interesting application of (2) is when $G$ is discrete (otherwise, (2) is an easy consequence of (1) even without assuming that $G$ has an amenable subgroup).
So when $G$ is a countable discrete group with an infinite amenable subgroup (such as the free group $\F_r$ with $r$ generators, where $r\ge 2$), Statement (2) implies that $A(G)$ is isometrically enAr.

We  summarize first the basic facts on the Fourier algebra that will be needed in the remainder of this section. For more details, the reader is directed to  \cite{eyma64} or Chapter 2 of \cite{kanilaubook}.

The Fourier algebra is the collection of all functions $h$ on $G$ of the form
$h=\overline{f}\ast \check g$, with $f,g\in L^2(G)$ and $\check g(s)=g(s^{-1})$. The norm of $A(G)$ is given by \[\|h\|=\inf\{\|f\|_2\|g\|_2:h=\overline{f}\ast \check g,\; f,\,g\in L^2(G)\}.\]

We may remark that, when $G$ is abelian, $A(G)$ identifies with $L^1(\widehat G)$ via the Fourier transform. The results stated in Section 4 for the group algebra show therefore that the Fourier algebra $A(G)$ is isometrically enAr when $G$ is Abelian. Non-Arens regularity of $A(G)$ has however turned out to be more resistant and a complete solution to the regularity problem for the Fourier algebra is not known yet.

The Banach dual of $A(G)$ is isometrically isomorphic to the group von Neumann algebra $VN(G)$, which is the closure in the weak operator topology of the linear span of $\{\lambda(x) : x \in G\}$
in $\mathcal B(L^2(G))$, where $\lambda$ is the left regular representation of $G$ on $L^2(G)$.
This linear isometry identifies each $T\in VN(G)$ with an element $\varphi_T\in A(G)^\ast$ such that
\[\varphi_T\left(\overline f\ast \check g\right)=\<{Tg,f}>
,\] where the bracket referst to the $L^2(G)$ inner product.

Under this identification, normal sates of $VN(G)$ correspond to the set
 \[\mathcal{P}_1(G)=\{\varphi\in A(G)\colon \varphi\; \text{is positive definite and}\; \|\varphi \|= \varphi(e) = 1\}.\]
So here, a TI-net is a net $\{\varphi_\alpha\colon \alpha \in \Lambda\}$  in $\mathcal{P}_1(G)$
 with the property \[\lim_{\alpha} \left\| \varphi _\alpha \varphi-\varphi_\alpha\right\|=\lim_{\alpha} \left\| \varphi\varphi _\alpha -\varphi_\alpha\right\|=0\quad\text{ for every}\quad\varphi\in \mathcal{P}_1(G).\]

		Our approach is based on the following two lemmas:
\begin{lemma}[Theorem 2.4 of \cite{chou82TIM}] \label{chou2.4}
Let $\{a_n\}_{n\in \N}$  be a sequence of normal states of a von Neumann algebra $\mV$ such that $\lim_m \norm{ a_n-a}=2$ for each normal state of $\mV$.
$a\in \mV_\ast$.
Then there exist positive integers $n_1<n_2<\cdots$ and a sequence of  normal states
$\{b_j\}_{j\in \N}$ such that
\begin{enumerate}
  \item $\lim_j \norm{a_{n_j}-b_j}=0$ and
  \item The sequence $\{b_j\}_{j\in \N}$  is an orthogonal $\ell^1$-set.
\end{enumerate}
\end{lemma}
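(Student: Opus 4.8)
The plan is to extract the subsequence and build the orthogonal states by an induction in which the states already chosen are allowed to drift by summably small amounts, so that exact orthogonality can be restored at every finite stage and then survives passage to a norm limit. Two elementary facts carry the argument. First, for normal states $\phi,\psi$ on $\mV$ let $r=S\bigl((\phi-\psi)_+\bigr)$ be the support projection of the positive part in the Jordan decomposition of $\phi-\psi$; then $\phi-\psi$ is $\ge 0$ on $r\mV r$ and $\le 0$ on $(1-r)\mV(1-r)$, so $\norm{\phi-\psi}=2(\phi-\psi)(r)=2\bigl(\phi(r)-\psi(r)\bigr)$, and therefore $\norm{\phi-\psi}\ge 2-\varepsilon$ forces $\phi(r)\ge 1-\varepsilon/2$ and $\psi(r)\le\varepsilon/2$. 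Secondly, if $\phi$ is a normal state and $e$ is a projection with $\phi(e)\ge 1-\delta>0$, then $\phi_e(x):=\phi(exe)/\phi(e)$ defines a normal state with $S(\phi_e)\le e$ and, by the Cauchy--Schwarz inequality for states together with renormalisation, $\norm{\phi-\phi_e}\le 2\sqrt\delta+\delta$; recall also that the set of normal states is norm closed.

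Fix positive reals $\varepsilon_k<2/(k-1)$ with $\sum_k\sqrt{k\varepsilon_k}<\infty$, and write $\delta_k=2\sqrt{\varepsilon_k/2}+\varepsilon_k/2$ and $\mu_k=2\sqrt{(k-1)\varepsilon_k/2}+(k-1)\varepsilon_k/2$. Put $n_1=1$ and $b_1^{(1)}=a_1$; assume inductively that normal states $b_1^{(k-1)},\dots,b_{k-1}^{(k-1)}$ with pairwise orthogonal supports are given. Then $\sigma:=\frac1{k-1}\sum_{i<k}b_i^{(k-1)}$ is a normal state, so by hypothesis $\norm{a_n-\sigma}\to 2$ and we may pick $n_k>n_{k-1}$ with $\norm{a_{n_k}-\sigma}\ge 2-\varepsilon_k$; put $r_k=S\bigl((a_{n_k}-\sigma)_+\bigr)$. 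By the first fact $a_{n_k}(r_k)\ge 1-\varepsilon_k/2$ and $\sigma(r_k)\le\varepsilon_k/2$, hence $b_i^{(k-1)}(r_k)\le(k-1)\varepsilon_k/2$ for each $i<k$. Now set $b_k^{(k)}:=(a_{n_k})_{r_k}$ and, for $i<k$, $b_i^{(k)}:=(b_i^{(k-1)})_{e_i}$ with $e_i:=(1-r_k)\wedge S(b_i^{(k-1)})$; here $b_i^{(k-1)}(e_i)\ge b_i^{(k-1)}(1-r_k)\ge 1-(k-1)\varepsilon_k/2>0$, so the compression is legitimate. Since $S(b_i^{(k)})\le e_i\le S(b_i^{(k-1)})$, the states $b_1^{(k)},\dots,b_{k-1}^{(k)}$ remain pairwise orthogonal, and since $S(b_i^{(k)})\le 1-r_k$ while $S(b_k^{(k)})\le r_k$, the whole family $b_1^{(k)},\dots,b_k^{(k)}$ has pairwise orthogonal supports. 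By the second fact, $\norm{a_{n_k}-b_k^{(k)}}\le\delta_k$ and $\norm{b_i^{(k-1)}-b_i^{(k)}}\le\mu_k$ for $i<k$.

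Because $\sum_{k>i}\mu_k<\infty$, the norm limit $b_i:=\lim_k b_i^{(k)}$ exists and is again a normal state. For $k\ge j>i$ one has $S(b_i^{(k)})\le\cdots\le S(b_i^{(j)})\le 1-r_j$ and $S(b_j^{(k)})\le r_j$, so letting $k\to\infty$ and using normality of $b_i$ and $b_j$ gives $S(b_i)\le 1-r_j$ and $S(b_j)\le r_j$, whence $S(b_i)S(b_j)=0$ for $i\ne j$; the supports are nonzero and orthogonal, so the $b_j$ are distinct, and $\{b_j\}_{j\in\N}$ is an orthogonal $\ell^1$-set. Finally $\norm{a_{n_j}-b_j}\le\delta_j+\sum_{k>j}\mu_k\to 0$, which is conclusion (1), and the orthogonal $\ell^1$-set property is conclusion (2).

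The step I expect to be the real obstacle is precisely what forces this ``drifting'' scheme: a normal state $\phi$ with $\phi(e)$ small need \emph{not} lie norm-close to any normal state supported under $1-e$, so one cannot simply compress the newcomer $a_{n_k}$ away from the already-fixed supports $S(b_1),\dots,S(b_{k-1})$. One is therefore forced to move the older states as well, which is affordable only because the total mass they jointly carry on $r_k$ is at most $(k-1)\sigma(r_k)\le(k-1)\varepsilon_k/2$, summable by the choice of $\varepsilon_k$. A second point to watch is that each older $b_i^{(k-1)}$ must be compressed \emph{inside its own support}, by $(1-r_k)\wedge S(b_i^{(k-1)})$ rather than by $1-r_k$: a common compression by $1-r_k$ can identify or overlap formerly orthogonal supports, whereas staying under $S(b_i^{(k-1)})$ keeps the supports shrinking, hence pairwise orthogonal, and makes the limiting support control work. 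The remaining verifications are routine — that $\sigma$ is genuinely a normal state so the hypothesis applies, that every denominator $\phi(e)$ occurring is strictly positive, and that the support inclusions pass to the norm limit by normality of the states involved.
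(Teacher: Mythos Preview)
The paper does not prove this lemma; it is quoted from Chou's 1982 paper as a black box, so there is no argument in the present paper to compare yours against.

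On its own merits, your drifting scheme has a genuine gap at the compression of the old states. You set $e_i=(1-r_k)\wedge S(b_i^{(k-1)})$ and assert $b_i^{(k-1)}(e_i)\ge b_i^{(k-1)}(1-r_k)$. Since $e_i\le 1-r_k$ and $b_i^{(k-1)}$ is positive, the inequality goes the \emph{other} way, and the deficit can be total. In $M_2(\C)$, take $s_i:=S(b_i^{(k-1)})=E_{11}$ (so $b_i^{(k-1)}(x)=x_{11}$) and let $r_k$ be the rank-one projection onto the unit vector $(\sqrt{\varepsilon},\sqrt{1-\varepsilon})$. Then $b_i^{(k-1)}(r_k)=\varepsilon$ is as small as you like and $b_i^{(k-1)}(1-r_k)=1-\varepsilon$, yet $(1-r_k)\wedge E_{11}=0$, so $b_i^{(k-1)}(e_i)=0$ and the compressed state $(b_i^{(k-1)})_{e_i}$ is undefined. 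Nothing in your construction forces the Jordan projection $r_k$ to commute with the old supports $s_i$, so this obstruction is live. The commutative reflex ``$\phi$ supported on $s$ implies $\phi(p)=\phi(p\wedge s)$'' is precisely what fails here. This is the same non-commutative phenomenon you warn about in your final paragraph, only it bites one layer deeper than you anticipated: knowing that $b_i^{(k-1)}(r_k)$ is small does not let you push the support of $b_i^{(k-1)}$ under $1-r_k$ while keeping it under $s_i$, because the meet $(1-r_k)\wedge s_i$ can collapse entirely. One standard way around this is to abandon lattice meets and work with spectral projections of $s_i r_k s_i$ inside $s_i\mV s_i$: these stay under $s_i$ and, by a Chebyshev-type estimate, carry almost all of $b_i^{(k-1)}$; the price is that they are only approximately orthogonal to $r_k$, so the induction needs an extra layer of adjustment beyond what you have written.
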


\begin{lemma}
\label{VN}Let $G$ be a locally compact group and let  $Q$  be  a  projection in $VN(G)$. Take $h\in L^2(G)$, with $Qh\neq 0$ and define  $\phi=\frac{1}{\|Qh\|_2}Q h\in L^2(G)$ and $\psi=\phi \ast \widetilde{\phi}$. Then  $\psi\in A(G)\cap \mathcal{P}_1(G)$ and $S(\psi)\leq Q$.
\end{lemma}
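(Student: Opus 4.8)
The plan is to recognise $\psi$ as the Fourier‑algebra incarnation of the vector state on $VN(G)$ attached to the unit vector $\phi$, and then to read $S(\psi)\le Q$ straight off the definition of the support projection.

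First I would settle the membership $\psi\in A(G)\cap\mathcal{P}_1(G)$. Since $\phi=\tfrac{1}{\|Qh\|_2}Qh$ satisfies $\|\phi\|_2=1$, the function $\psi=\phi\ast\widetilde\phi$ is the positive‑definite element of $A(G)$ determined by $\phi$; unwinding the definitions of the convolution, of $g\mapsto\check g$ and of $\phi\mapsto\widetilde\phi$ (equivalently, writing $\psi$ in the form $\overline f\ast\check g$ with $f=g=\phi$) one obtains the defining identity $\psi(x)=\langle\lambda(x)\phi,\phi\rangle$ for every $x\in G$, so $\psi\in A(G)$. Being a diagonal matrix coefficient of the (unitary) left regular representation, $\psi$ is positive definite, i.e. a positive normal functional on $VN(G)$; moreover $\psi(e)=\langle\phi,\phi\rangle=1$, whence $\|\psi\|=\langle\psi,I\rangle=\psi(e)=1$ and $\psi\in\mathcal{P}_1(G)$.

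Next I would compute $\psi$, regarded as a functional on $VN(G)$, at the projection $Q$. Under the duality $A(G)^\ast=VN(G)$ recalled above, with $\varphi_T(\overline f\ast\check g)=\langle Tg,f\rangle$ (consistently with $\langle u,\lambda(x)\rangle=u(x)$), the normal functional determined by $\psi=\overline\phi\ast\check\phi$ is $T\mapsto\langle T\phi,\phi\rangle$. Since $Q$ is a projection and $Q\phi=\tfrac{1}{\|Qh\|_2}Q^2h=\tfrac{1}{\|Qh\|_2}Qh=\phi$, this gives $\langle\psi,Q\rangle=\langle Q\phi,\phi\rangle=\langle\phi,\phi\rangle=1=\langle\psi,I\rangle=\|\psi\|$.

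To conclude, I invoke Definition~\ref{def:supp}: $S(\psi)$ is by definition the smallest projection $P\in VN(G)$ with $\langle\psi,P\rangle=\langle\psi,I\rangle=\|\psi\|$, and the previous paragraph exhibits $Q$ as such a projection, so $S(\psi)\le Q$. The only step that needs genuine care is the identification underlying $\psi(x)=\langle\lambda(x)\phi,\phi\rangle$ together with the determination of the vector implementing $\psi$ as a vector state: one must verify, against the chosen conventions for convolution, for $g\mapsto\check g$ and $\phi\mapsto\widetilde\phi$, and for the $L^2$‑inner product, that this vector is $\phi$ itself — so that it lies in $\operatorname{Ran}(Q)$ — and once that is pinned down the remainder of the argument is purely formal.
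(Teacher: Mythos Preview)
Your proof is correct and follows essentially the same approach as the paper's: both identify $\psi$ with the vector state $T\mapsto\langle T\phi,\phi\rangle$ on $VN(G)$, compute $\langle\psi,Q\rangle=\langle Q\phi,\phi\rangle=1=\|\psi\|$ using $Q\phi=\phi$, and conclude $S(\psi)\le Q$ from the minimality in Definition~\ref{def:supp}. The paper simply declares the membership $\psi\in A(G)\cap\mathcal{P}_1(G)$ ``clear'' and omits your careful discussion of the convolution and inner-product conventions, but the argument is otherwise identical.
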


\begin{proof}
It is clear that $\psi\in A(G)\cap \mathcal{P}_1(G)$. To prove that $S(\psi)\leq Q$, we only have to recall that $S(\psi)$ is the smallest projection in $VN(G)$ with $\langle \psi, S(\psi)\rangle=\norm{\psi}$ and observe that:
\begin{align*}
  \left\langle \psi,Q \right\rangle_{_{\langle A(G),VN(G)\rangle}}&=
  \left\langle Q\phi,\phi \right\rangle_{_{\langle L^2(G),L^2(G)\rangle}}\\&=\frac{1}{\norm{Qh}_2^2}
  \left\langle Q Q h,Qh \right\rangle_{_{\langle L^2(G),L^2(G)\rangle}}\\&=1=\psi(e)=\|\psi\|.
\end{align*}
\end{proof}

 Next we proceed to check that the Fourier algebra of a non-discrete locally compact group contains TI-nets which are orthogonal $\ell^1(\chi(G))$-sets.

\begin{theorem}\label{Gentent}
If $G$ is a non-discrete locally compact group, then $A(G)$ contains a TI-net of true cardinality $\chi(G)$ that is an orthogonal $\ell^1(\chi(G))$-set.
\end{theorem}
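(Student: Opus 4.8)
The plan is to build the TI-net directly from the local structure of $G$ at the identity, using a decreasing neighbourhood base and the construction of Lemma \ref{VN} to manufacture orthogonal normal states in $\mathcal{P}_1(G)$. Let $\chi=\chi(G)$ and fix a base $\{U_\xi\colon \xi\prec \chi\}$ of symmetric, relatively compact neighbourhoods of $e$, directed by reverse inclusion; since $G$ is non-discrete we may assume each $U_\xi$ has the Haar measure of $U_\xi U_\xi$ as small as we wish and, crucially, that inside each $U_\xi$ we can still find $\chi$-many ``room'' to place pairwise disjoint translates. The standard way to get a net in $\mathcal{P}_1(G)$ converging to a topologically invariant mean is to take $\varphi_\xi=\frac{1}{|U_\xi|}\mathbbm 1_{U_\xi}\ast\widetilde{\mathbbm 1_{U_\xi}}$ (normalised so that $\varphi_\xi(e)=1$); these are positive-definite, lie in $A(G)\cap\mathcal P_1(G)$, and the shrinking of $U_\xi$ forces $\|\varphi\varphi_\xi-\varphi_\xi\|\to 0$ for every $\varphi\in\mathcal P_1(G)$ — this is essentially Chou's argument for the existence of TI-nets of cardinality $\chi(G)$, and I would cite or reprove it.

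The substantive new point is to arrange that the chosen net is \emph{simultaneously} an orthogonal $\ell^1(\chi)$-set, i.e.\ that $S(\varphi_\xi)S(\varphi_{\xi'})=0$ for $\xi\neq\xi'$. The first move is to realise that the support projections of functionals of the form $\phi\ast\widetilde\phi$ with $\phi\in L^2(G)$ supported on a set $V$ are dominated by the projection onto the $\lambda(G)$-cyclic subspace generated by $L^2(V)$; more usefully, by Lemma \ref{VN}, if I first choose a projection $Q_\xi\in VN(G)$ with $Q_\xi Q_{\xi'}=0$ and then set $\phi_\xi=\frac{1}{\|Q_\xi h_\xi\|_2}Q_\xi h_\xi$, $\psi_\xi=\phi_\xi\ast\widetilde{\phi_\xi}$, I automatically get $S(\psi_\xi)\le Q_\xi$ and hence orthogonality of the support projections. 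So the task reduces to producing a pairwise-orthogonal family of projections $\{Q_\xi\colon\xi\prec\chi\}$ in $VN(G)$ together with vectors $h_\xi$ so that the resulting $\psi_\xi$ still form a TI-net. I would take $Q_\xi$ to be the orthogonal projection onto $\overline{\lambda(G)L^2(W_\xi)}$ for a carefully chosen decreasing sequence of small neighbourhoods $W_\xi\subseteq U_\xi$ of $e$ with the $W_\xi$ ``spread out'' — concretely, choose points $t_\xi$ and neighbourhoods $W_\xi$ so that the translates $t_\xi W_\xi$ are pairwise disjoint, which is possible because $G$ is non-discrete and $|G|\ge\chi$; then the corresponding cyclic subspaces, while not literally orthogonal as subspaces of $L^2(G)$, can be forced to have orthogonal support projections by instead working with $W_\xi$ whose translates under a fixed discrete set separate, or by passing to a slightly coarser projection. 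Since $Q_\xi$ contains $\lambda(G)L^2(W_\xi)$, the $\psi_\xi$ built from $h_\xi=\mathbbm 1_{W_\xi}$ still ``live near $e$'' in the sense needed for the TI-property, because $\psi_\xi$ is supported in $W_\xi W_\xi^{-1}\subseteq U_\xi$ (say) and $\psi_\xi\to$ a TI-mean by the same shrinking argument.

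I would then assemble the proof in the order: (i) fix the base $\{U_\xi\}$ and record $\chi(G)=\mathrm{tr}|\chi|$; (ii) use non-discreteness plus $|G|\ge\chi$ to select nested neighbourhoods $W_\xi$ and translation parameters so that the projections $Q_\xi$ onto the associated cyclic subspaces are pairwise orthogonal — this is the heart of the matter; (iii) apply Lemma \ref{VN} with $Q=Q_\xi$ and $h=\mathbbm 1_{W_\xi}$ (or a normalised bump) to get $\psi_\xi\in A(G)\cap\mathcal P_1(G)$ with $S(\psi_\xi)\le Q_\xi$, hence pairwise orthogonal support projections; (iv) verify the TI-property $\lim_\xi\|\varphi\psi_\xi-\psi_\xi\|=\lim_\xi\|\psi_\xi\varphi-\psi_\xi\|=0$ for all $\varphi\in\mathcal P_1(G)$ from the fact that $\psi_\xi$ is supported arbitrarily close to $e$, reducing to the case $\varphi$ itself positive-definite with small support and using $\|\varphi\|=1$; and (v) note that $\|\psi_\xi\|=\psi_\xi(e)=1$ makes $\{\psi_\xi\}$ an orthogonal $\ell^1(\chi)$-set with $K=M=1$.

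The main obstacle I anticipate is step (ii): making the support projections \emph{exactly} orthogonal rather than merely ``almost'' orthogonal. Two cyclic subspaces $\overline{\lambda(G)L^2(W)}$ and $\overline{\lambda(G)L^2(W')}$ need not be orthogonal even when $W\cap W'=\emptyset$, since $\lambda(G)$ mixes everything. The fix is to be more clever about which projections to use — for instance, pick a \emph{discrete} set of $\chi$-many points and small enough neighbourhoods so that the functions $\lambda(t_\xi)\mathbbm 1_{W_\xi}$ genuinely generate orthogonal subspaces, or exploit that in a non-discrete group one can find, inside any neighbourhood of $e$, a copy of a structure (a small compact subgroup, or a long transversal) on which orthogonality is transparent; alternatively one can cite the explicit construction in \cite{FGnew} or \cite{granirer}/\cite{Hu97} of $\chi(G)$-many orthogonal projections in $VN(G)$ and feed those into Lemma \ref{VN}. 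I would present whichever of these is cleanest, but flag this orthogonality construction as the one genuinely delicate ingredient; everything else is soft once it is in place.
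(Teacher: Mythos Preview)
Your framework is right --- build $\psi_\xi=\phi_\xi\ast\widetilde{\phi_\xi}$ via Lemma \ref{VN} from orthogonal projections $Q_\xi\in VN(G)$, then read off the TI-property from smallness of $\supp\psi_\xi$ --- but step (ii), which you yourself flag as the delicate one, does not work as written, and the fixes you suggest do not rescue it. The projection onto $\overline{\lambda(G)L^2(W_\xi)}$ lies in the \emph{commutant} $VN(G)'$, not in $VN(G)$; worse, since translates of any neighbourhood cover $G$, that cyclic subspace is all of $L^2(G)$ and the projection is the identity. Disjoint translates $t_\xi W_\xi$ do nothing for you: disjointness of sets in $G$ gives orthogonality of multiplication projections (which live in $L^\infty(G)\subseteq VN(G)'$), not of projections in $VN(G)$. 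So you are missing the actual mechanism for producing a $\chi(G)$-indexed orthogonal family of projections in $VN(G)$.

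The paper supplies that mechanism by splitting into two cases. When $G$ is metrizable, no direct construction is attempted: any TI-sequence satisfies $\lim_n\|a_n-a\|=2$ for every normal state $a$, so Chou's extraction lemma (Lemma \ref{chou2.4}) yields an orthogonal $\ell^1$-subsequence approximating a subsequence of the original TI-sequence; that approximating sequence is itself TI. When $G$ is not metrizable, the Kakutani--Kodaira theorem provides a decreasing chain of compact normal subgroups $N_\alpha$ cofinal in the neighbourhood filter at $e$; the averaging operators $P_\alpha f=\lambda_{N_{\alpha+1}}\ast f$ are then genuine projections in $VN(G)$ (left convolution by a measure), they increase with $\alpha$, and $Q_\alpha=P_{\alpha+1}-P_\alpha$ is the orthogonal family you need. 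One then chooses $h_\alpha$ supported near $e$ so that $Q_\alpha h_\alpha\neq 0$ and $\supp\psi_\alpha\subseteq U_\alpha$, and the TI-property follows from \cite{rena72}. Your parenthetical mention of ``a small compact subgroup'' is pointing in this direction, but you would need to commit to it and to handle the metrizable case separately, since there the only compact subgroup inside small neighbourhoods is $\{e\}$ and the $Q_\alpha$ collapse.
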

\begin{proof}
If $G$ is metrizable, this was shown by Chou in \cite{chou82TIM}. The proof there consists in observing that every TI-sequence in $\mathcal P_1(G)$ satisfies the conditions of Lemma \ref{chou2.4} (this is \cite[Lemma 3.2]{chou82TIM}) and, hence, has a subsequence  that can be approximated by  another $TI$-sequence  which  is also an orthogonal $\ell^1$-set.   TI-sequences that are orthogonal $\ell^1$-sets can therefore be found as long as
    TI-sequences are  available, which is the case in non-discrete groups, see \cite[Proposition 3]{rena72} or  \cite[Lemma 5.2]{FGnew}.

We now  assume that $G$ is not metrizable. Let $\eta=\chi(G)$ and
 $\{U_\alpha\colon \alpha<\eta\}$ be a base of symmetric neighbourhoods of the identity $e$. For each $\alpha<\eta$, let $V_\alpha$ be a neighbourhood of $e$ with $V_\alpha^4\subset U_\alpha$ and put
 $\mathcal{B}=\{V_\alpha\colon \alpha<\eta\}$.

We consider the family of compact subgroups $\{N_\alpha\colon \alpha<\eta\}$ given by the Kakutani-Kodaira theorem for the base $\mathcal B$ (see \cite[Proposition 4.3 and its proof]{Hu95}).
Recall,  in particular, that $N_{\alpha+1}\subset N_\alpha \cap
V_\alpha$ for every $\mathbf{\alpha<\eta}$.
Consider then the projections $P_\alpha \colon L^2(G) \to L^2(G/N_{\alpha+1})$ given by $P_\alpha(f)= \lambda_{{N_\alpha+1}}\ast f$, where $L^2(G/N_{\alpha+1})$ stands for the functions of $L^2(G)$ that are constant on  the cosets of  $N_{\alpha+1}$ and $\lambda_{{N_\alpha+1}}$ stands for the regular representation of $N_{\alpha+1}$. This is an increasing net of projections. Putting $Q_\alpha=P_{\alpha+1}-P_\alpha$, we obtain  an orthogonal net of projections $\{Q_\alpha\colon \alpha<\eta\}$.

Since $N_{\alpha+1}\setminus N_{\alpha+2}\ne\emptyset$ and $N_{\alpha}$'s are closed, we may pick for each $\alpha<\eta$,   a symmetric compact neighbourhood $W_\alpha\subset V_\alpha$ of $e$
such that  $N_{\alpha+1}\setminus N_{\alpha+2}W_\alpha\ne\emptyset$, and so $N_{\alpha+1} W_\alpha\setminus N_{\alpha+2}W_\alpha$
has non-empty interior in $G$.
Define
$h_\alpha:=1_{N_{\alpha+2}W_{\alpha}}$.  Put
\[\phi_\alpha=\frac{1}{\|Q_\alpha
h_\alpha\|_2}Q_\alpha h_\alpha\quad\text{ and}\quad
\psi_\alpha=\phi_\alpha\ast
\widetilde{\phi_\alpha}.\]
Note that $P_{\alpha+1}h_\alpha=h_\alpha,$ and so $Q_\alpha h_\alpha=h_\alpha-P_\alpha h_\alpha$ for each $\alpha<\eta$.
We claim that $h_\alpha\ne P_\alpha h_\alpha$ so that $Q_\alpha h_\alpha$ is not zero for each $\alpha<\eta$.
Let $x=pw$ be any point in $N_{\alpha+1} W_\alpha\setminus N_{\alpha+2}W_\alpha$ with $p\in N_{\alpha+1} $ and $w\in W_\alpha$.
Then $h_\alpha(x)=0$, while
 \begin{align*}
P_\alpha h_\alpha (x)&=P_\alpha h_\alpha (w)\\
  &=\int_{N_{\alpha+1}} h_\alpha (t^{-1}w)d\lambda_{N_{\alpha+1}}(t)=
  \lambda_{{N_\alpha+1}}\left(wW_\alpha N_{\alpha+2}\cap N_{\alpha+1}\right)\neq 0,
\end{align*} where
the latter value is non-zero because the interior in $N_{\alpha+1}$ of the set $wW_\alpha N_{\alpha+2}\cap N_{\alpha+1}$  is non-empty since it   contains  $N_{\alpha+2}$.

Hence $P_\alpha h_\alpha $ and  $h_\alpha$ differ on the set $N_{\alpha +1}W_\alpha\setminus N_{\alpha+2}W_\alpha$, which is of positive measure in $G$ (having non-empty interior).
We conclude that $P_\alpha h_\alpha \neq h_\alpha $ for each $\alpha<\eta$, as wanted.

 Lemma \ref{VN} then implies that $S(\psi_\alpha)\leq Q_{\alpha}$,  showing that  $\{\psi_\alpha \colon \alpha<\eta\}$ is an orthogonal $\ell^1(\eta)$-set.

Since the support of $Q_{\alpha} h_{\alpha}= h_{\alpha}-\lambda_{N_{\alpha+1}}\ast h_{\alpha}$ is clearly contained in $N_{\alpha+1} W_{\alpha}$, which is in turn contained in
$V_\alpha^2$,  we
see that \[\supp(\psi_\alpha)=\supp\left(\phi_\alpha\ast \widetilde{\phi_\alpha}\right)\subseteq V_\alpha^4\subseteq U_\alpha\quad\text{
for each}\quad\alpha<\eta,\]  and  \cite[Proposition 3]{rena72} proves that  $\{\psi_\alpha \colon \alpha<\eta\}$ is  a TI-net as well. Since this net is directed with the natural order of the ordinal $\eta$, its true cardinality is $\eta$.
\end{proof}

When $G$ is discrete, Theorem \ref{Gentent} is useless, but, in case $G$ is amenable, we can replace TI-nets by weak bounded approximate identities.

Following \cite{eyma64}, let $P(G)$ be the space of continuous positive definite
functions on $G$ and $B(G)$ be its the linear span.
The space $B(G)$ is a Banach algebra,  called the the Fourier-Stieltjes algebra, and if $C^*(G)$ is the group
$C^*\text{-algebra}$ of $G$, then $B(G)$ is its Banach dual.

\begin{theorem}
  \label{countamen}
  If $G$ is a locally compact group that contains a $\sigma$-compact, non-compact  open amenable subgroup $H$,   then $A(G)$ has an orthogonal weak bai.
  \end{theorem}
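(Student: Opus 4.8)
The plan is to reduce everything to the open subgroup $H$, build the required sequence inside $A(H)$, and transfer it to $A(G)$ along the isometric inclusion $A(H)\embed A(G)$ given by extension by zero (valid because $H$ is open); under this inclusion $\mathcal{P}_1(H)=\mathcal{P}_1(G)\cap A(H)$, so it suffices to produce a sequence $\{b_j\}_{j\in\N}\subseteq\mathcal{P}_1(H)$ that is a weak bai of $A(H)$ and whose support projections $S(b_j)$, computed in $VN(G)$, are pairwise orthogonal. I would obtain it from Chou's Lemma \ref{chou2.4}. First I would produce a genuine norm-one approximate identity $(u_n)_{n\in\N}$ of $A(H)$ with $u_n\in\mathcal{P}_1(H)$ (it exists by Leptin's theorem and the $\sigma$-compactness of $H$), enjoying the extra property that $\lim_n\norm{u_n-a}=2$ for \emph{every} normal state $a$ of $VN(H)$; Lemma \ref{chou2.4} applied in $VN(H)$ then gives $n_1<n_2<\cdots$ and normal states $b_j$ of $VN(H)$ (so $b_j\in\mathcal{P}_1(H)\subseteq A(H)$) with $\norm{u_{n_j}-b_j}\to0$ and $\{b_j\}$ an orthogonal $\ell^1$-set in $VN(H)$.

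The delicate point — and the step I expect to be the main obstacle — is to arrange that $(u_n)$ escapes the normal state space of $VN(H)$; this is exactly where the \emph{non-compactness} of $H$ enters. The naive Følner/Leptin functions $u_n=\norm{1_{F_n}}_2^{-2}\,1_{F_n}\ast 1_{F_n^{-1}}$ do not work: their support projections in $VN(H)$ do not tend to $0$ (they are frequently the identity). The idea is that, $H$ being amenable and non-compact, the left regular representation $\lambda=\lambda_H$ weakly contains the trivial representation while $L^2(H)$ has no non-zero $\lambda$-invariant vector; hence (working, say, for $H$ discrete) $\lambda$ admits invariant subspaces $L^2(H)q$, with $q$ a projection of $VN(H)$ of arbitrarily small trace, still carrying $\lambda$-almost invariant unit vectors. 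Fixing compacta $K_n\uparrow H$, I would pick a unit vector $\xi_n\in L^2(H)q_n$ with $\tau(q_n)\le 2^{-n}$ and $\norm{\lambda(h)\xi_n-\xi_n}_2<1/n$ for $h\in K_n$, and set $u_n(h)=\la\lambda(h)\xi_n,\xi_n\ra$; then $u_n$ is positive definite with $u_n(e)=\norm{u_n}=1$ and $u_n\to1$ uniformly on compacta, so by the standard argument $(u_n)$ is an approximate identity of $A(H)$ lying in $\mathcal{P}_1(H)$. On the other hand a polar-decomposition computation in $L^2(VN(H))$ shows $S(u_n)$ is equivalent to a subprojection of $q_n$, so $\tau(S(u_n))\le 2^{-n}\to0$ and hence $S(u_n)\to0$ ultraweakly; consequently $\la a,S(u_n)\ra\to0$ and $\norm{u_n-a}\ge\la u_n-a,\,2S(u_n)-I\ra=2-2\la a,S(u_n)\ra\to2$ for every normal state $a$, as required. (When $H$ is not discrete this clause is unnecessary, being absorbed by Theorem \ref{Gentent}; when $VN(H)$ is not a factor one runs the same argument over its centre, using again that the trivial representation is not an atom of $VN(H)$ because $H$ is non-compact.)

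It remains to pass the orthogonality of $\{b_j\}$ from $VN(H)$ to $VN(G)$ and to see that $\{b_j\}$ is still a weak bai. Since $H$ is open, $L^2(G)$ is the Hilbert sum of the spaces $L^2(Ht)$ over the right cosets of $H$, each $\lambda_G(h)$ with $h\in H$ acts block-diagonally, and $T\mapsto\widetilde T$ (acting as $T$ on every block) is a normal unital $*$-isomorphism of $VN(H)$ onto $\lambda_G(H)''\subseteq VN(G)$, so it preserves orthogonality of projections. Writing $b_j=\phi_j\ast\widetilde{\phi_j}$ with $\phi_j\in L^2(H)$, $\norm{\phi_j}_2=1$, and $Q_j=S_{VN(H)}(b_j)$, the identity from the proof of Lemma \ref{VN} gives
\[\la b_j,\widetilde{Q_j}\ra_{A(G),VN(G)}=\la\widetilde{Q_j}\phi_j,\phi_j\ra_{L^2(G)}=\la Q_j\phi_j,\phi_j\ra_{L^2(H)}=\la b_j,Q_j\ra_{A(H),VN(H)}=\norm{b_j},\]
whence $S_{VN(G)}(b_j)\le\widetilde{Q_j}$; since the $Q_j$ (so the $\widetilde{Q_j}$) are pairwise orthogonal, so are the $S_{VN(G)}(b_j)$, and each $b_j$ being a state, $\norm{b_j}=1$; thus $\{b_j\}$ is an orthogonal $\ell^1(\N)$-set in $VN(G)=A(G)^\ast$ with $K=M=1$. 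Finally, $(u_n)$ being a genuine approximate identity of $A(H)$ and $b_k\in A(H)$, we have $\norm{u_{n_j}b_k-b_k}\to0$ as $j\to\infty$, so for each $k$
\[\norm{b_jb_k-b_k}\le\norm{b_j-u_{n_j}}\,\norm{b_k}+\norm{u_{n_j}b_k-b_k}\longto0\qquad(j\to\infty);\]
as $A(G)$ is commutative and $A(H)\embed A(G)$ isometrically, $\{b_j\}$ is a weak bai of $A(G)$. Therefore $\{b_j\}_{j\in\N}$ is an orthogonal weak bai of $A(G)$.
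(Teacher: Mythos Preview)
Your skeleton matches the paper's: reduce to $H$, verify the hypothesis of Lemma \ref{chou2.4} for a bai of $A(H)$, then transfer orthogonality and the weak-bai property to $A(G)$ via extension by zero; your block-diagonal map $T\mapsto\widetilde T$ is precisely the adjoint $R^\ast$ of the restriction $R\colon A(G)\to A(H)$ that the paper uses, and your last two paragraphs are essentially the paper's argument. The divergence, and the gap, is in verifying $\lim_n\norm{u_n-a}=2$. You assert that the naive Leptin bai fails and try to manufacture a special one with $\tau(S(u_n))\to 0$, but the existence of $\lambda$-almost-invariant unit vectors in $L^2(H)q_n$ for projections $q_n$ of arbitrarily small trace is not established: the factor case can be salvaged via quasi-equivalence of $\lambda$ with its restriction to any nonzero reducing subspace, but ``run the same argument over its centre'' is a gesture, not a proof, and the passage from ``$u_n\to 1$ uniformly on compacta'' to ``$(u_n)$ is a bai of $A(H)$'' is not the one-liner you suggest. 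You also cannot dismiss non-discrete $H$ by citing Theorem \ref{Gentent}, which yields TI-nets, not the weak bai the present statement promises.

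The detour is in any case unnecessary: the ordinary Leptin bai $(v_n)$ \emph{does} satisfy Chou's hypothesis. One has $v_n\to\mathbf 1$ in $\sigma(B(H),C^\ast(H))$, and since $H$ is non-compact the regular and trivial representations are disjoint, so by \cite[Corollaire 3.13]{arsac76} (or \cite[Proposition 2.8.9]{kanilaubook}) $\norm{\mathbf 1-u}_{B(H)}=1+\norm{u}_{A(H)}=2$ for every normal state $u$ of $VN(H)$. Testing against an almost-optimal $T_\varepsilon\in C^\ast(H)$ with $\norm{T_\varepsilon}\le 1$ then gives $\liminf_n\norm{v_n-u}_{A(H)}\ge 2-\varepsilon$, hence $\lim_n\norm{v_n-u}=2$, and Lemma \ref{chou2.4} applies directly to $(v_n)$. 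This two-line argument replaces your entire second paragraph and works uniformly for every $H$ satisfying the hypotheses.
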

  \begin{proof}
    It is a well-known theorem of Leptin that $A(H)$ contains a sequential  bai $\{v_n\}_{n\in \N}$, see, e.g., \cite[Theorem 2.7.2]{kanilaubook}. It is then clear that, in the $\sigma(B(H),C^\ast(H))$-topology, $\lim_n v_n=\mathbf{1}$ where $\mathbf{1}$ denotes the constant 1-function.

Since $H$ is  not compact, the regular representation  of $H$ is disjoint from the trivial one-dimensional representation. It then   follows from \cite[Corollaire 3.13]{arsac76} (or \cite[Proposition 2.8.9]{kanilaubook}) that, for any $u\in A(H)$,
\begin{equation*}\norm{\mathbf{1}-u}_{_{B(H)}} =1+\norm{u}_{_{A(H)}} .\end{equation*}

Let now $u\in A(H)$ be an arbitrary   positive definite function with $\norm{u}_{A(H)}=1$ (i.e., an arbitrary   normal state $u$ of $VN(H)=A(H)^\ast$). Given $\varepsilon>0$ there is then $T_\varepsilon\in C^\ast(H)$ with $\norm{T_\varepsilon}\leq 1$ such that
\[\left|
\<{
\mathbf{1}-u, T_{\varepsilon}
}
>\right| > 2-\varepsilon.\]
As a consequence, there is $n_\varepsilon \in \N$ such that, for $n\geq n_\varepsilon$,
\[ \left|\<{v_n-u, T_{\varepsilon}} >\right|\geq 2-\varepsilon.\]
It follows that
\begin{equation}\label{disjoint}
\lim_n \norm{v_n-u}_{_{A(H)}}=2.
\end{equation}
 Lemma \ref{chou2.4} now  provides an orthogonal $\ell^1$-sequence $\{u_j \}_{j\in \N}$ and a subsequence $\{v_{n_j}\}_{j\in \N}$   of $\{v_{n}\}_{n\in \N}$ such that
\begin{equation}\label{app}\lim_j \norm{v_{n_j}-u_j}_{_{A(H)}}=0.\end{equation}

We next consider the    restriction and extension maps, $R\colon A(G)\to A(H)$ and  $\Phi\colon A(H)\to A(G)$, the latter one defined by
$\Phi(u)(s)=u(s)$ if $s \in H$ and $\Phi(u)(s)=0$ if $s\notin H$. The adjoint $R^\ast\colon VN(H)\to VN(G)$ of $R$ is then   a multiplicative linear isometry (see \cite[Proposition 7.3.5]{deri11}, this is considerably easier when $H$ is open) and $\norm{\Phi(u)}_{A(G)}=\norm{u}_{A(H)}$, see \cite[Proposition 2.4.1]{kanilaubook}.

The sought after orthogonal weak bai will be the sequence $(\Phi(u_j) )_{j\in \N}$ as we check next.

Let $\{S(u_j)\}_{j\in\N}$ be the family of orthogonal projections corresponding to sequence $\{u_j \}_{j\in \N}$, and consider, for each $j\in \N,$ the operator in $VN(G)$ given by $R^*(S(u_j))$.

 Since normal states of $VN(G)$ correspond precisely  to  positive definite functions of $A(G)$ and these are clearly preserved by  $R$, $R^\ast$ must preserve self-adjointness. This, together with the multiplicative character of $R^\ast$, implies that the operators
$R^\ast(S(u_j))$ are  projections.
Finally, since \[\langle \Phi(u_j),R^*(S(u_j))\rangle=\langle u_j,S(u_j)\rangle=1=u_j(e)=\Phi(u_j)(e)\] for each $j\in\N$, we deduce according to Definition \ref{def:supp},  that $S(\Phi(u_j))\le R^*(S(u_j))$ for each $j\in \N.$
Thus $(\Phi(u_j) )_{j\in \N}$ is orthogonal in the sense of Definition \ref{def:appl1}.

That  $(\Phi(u_j) )_{j\in \N}$ is a weak bai follows from the bai property of the sequence $(v_{n_j})_{j\in \N}$, the approximation property  \eqref{app} and the following inequality, valid for every $j,k\in \N$:
\begin{align*}
  \norm{\Phi(u_j)\Phi(u_k)-\Phi(u_k)}_{_{A(G)}}&=\norm{u_ju_k-u_k}_{_{A(H)}}\\
  &\leq \norm{u_ju_k-v_{n_j}u_k}_{_{A(H)}}+ \norm{v_{n_j}u_k-u_k}_{_{A(H)}}. \end{align*}
\end{proof}

\begin{corollary}\label{cor:ISOVN}
 Let $G$ be a locally compact group.  $A(G)$ is isometrically enAr if  $G$ satisfies any of the following conditions:
 \begin{enumerate}
   \item  $\chi(G)\geq \kappa(G)$, or
   \item $G$ is second countable and  contains a non-compact open   amenable subgroup.
  \end{enumerate}
\end{corollary}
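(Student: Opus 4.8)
The plan is to read off both assertions directly from Theorem \ref{TI}, applied with $\mA=A(G)$, which is (isometrically) the predual of the group von Neumann algebra $VN(G)$. So in each of the two cases all that has to be supplied is an orthogonal $\ell^1(\eta)$-set inside $A(G)$ that is simultaneously a weak TI-net or a weak bai, for a suitable infinite cardinal $\eta$, together with the verification that $d(A(G))\le\eta$. We may assume $G$ is infinite, and we use, as recalled at the start of this section, that $d(A(G))=\max\{\kappa(G),\chi(G)\}$.

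For (1), suppose $\kappa(G)\le\chi(G)$. First I would observe that $G$ must then be non-discrete: if $G$ were discrete we would have $\chi(G)=1$, forcing $\kappa(G)=|G|\le 1$ and hence $G$ finite, contrary to assumption. I would then invoke Theorem \ref{Gentent}, which furnishes a TI-net in $A(G)$ of true cardinality $\eta:=\chi(G)$ that is an orthogonal $\ell^1(\eta)$-set; since a TI-net is a fortiori a weak TI-net, Theorem \ref{TI} applies and produces an isometry $\ell^\infty(\chi(G))\to VN(G)/\wap(A(G))$. Because $\kappa(G)\le\chi(G)$ we have $d(A(G))=\max\{\kappa(G),\chi(G)\}=\chi(G)=\eta$, so the third clause of Theorem \ref{TI} gives that $A(G)$ is isometrically enAr, strengthening the results of Granirer and Hu quoted in the introduction.

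For (2), let $H$ be a non-compact open amenable subgroup of the second countable group $G$. An open subgroup of a second countable group is again second countable, and a second countable locally compact group is $\sigma$-compact; hence $H$ is a $\sigma$-compact, non-compact, open, amenable subgroup of $G$, and Theorem \ref{countamen} produces an orthogonal weak bai $\{\Phi(u_j)\}_{j\in\N}$ in $A(G)$ — a sequence, hence of true cardinality $\eta:=\aleph_0$. Theorem \ref{TI} again gives an isometry $\ell^\infty(\aleph_0)\to VN(G)/\wap(A(G))$, and since $G$ is second countable the algebra $A(G)$ is separable, so $d(A(G))=\aleph_0=\eta$; the third clause of Theorem \ref{TI} once more yields that $A(G)$ is isometrically enAr. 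In particular this covers the case of a countable discrete group with an infinite amenable subgroup, for instance a free group $\F_r$ on $r\ge 2$ generators.

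The substantive content lies entirely in the two feeder theorems \ref{Gentent} and \ref{countamen} (and, underneath them, Theorem \ref{TTr}); the corollary itself presents no genuine obstacle. The only points that need a moment's care are the two density-character computations securing the hypothesis $d(A(G))\le\eta$ of Theorem \ref{TI} — that $\kappa(G)\le\chi(G)$ makes $\chi(G)$ dominate the density character in case (1), and that second countability forces it to be $\aleph_0$ in case (2) — together with the remark that hypothesis (1) automatically excludes the discrete case, which is precisely where Theorem \ref{Gentent} would fail.
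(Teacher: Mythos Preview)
Your proposal is correct and follows essentially the same route as the paper: in each case you feed the output of Theorem~\ref{Gentent} (respectively Theorem~\ref{countamen}) into Theorem~\ref{TI}, and then invoke the density-character identity $d(A(G))=\max\{\kappa(G),\chi(G)\}$ to conclude. You have in fact supplied more detail than the paper does---the observation that hypothesis~(1) forces $G$ to be non-discrete, and that second countability of $G$ makes the open subgroup $H$ $\sigma$-compact---both of which the paper leaves implicit.
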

\begin{proof} (1) To obtain the linear isometry
 of $VN(G)$ into $VN(G)/\wap(A(G))$, simply combine Theorems \ref{TI} and \ref{Gentent} and the fact that, under the hypothesis of (1), $d(A(G))=\chi(G)$.

 (2) We only have to put together Theorems \ref{countamen} and  \ref{TI}.
\end{proof}

\bibliographystyle{plain}

\end{document}